\newtheorem{theorem}{Theorem}
\newtheorem{lemma}[theorem]{Lemma}
\newtheorem{remark}[theorem]{Remark}
\date{}
\begin{document}

\title{Benchmark Computation of Eigenvalues with Large Defect for Non-Selfadjoint
Elliptic Differential Operators \thanks{ The work of the second author has
been funded by the Austrian Science Fund (FWF) through the project P
29197-N32.}}
\author{Rebekka Gasser\thanks{Institut f\"{u}r Mathematik, Universit\"{a}t Z\"{u}rich,
CH-8057 Z\"{u}rich, Switzerland, \texttt{rebekka.klara@hispeed.ch}. This work
is partially based on the master's thesis \cite{GasserMaster} of this author.}
\and Joscha Gedicke\thanks{University of Vienna, Faculty of Mathematics,
Oskar-Morgenstern-Platz 1, 1090 Vienna, Austria,
\texttt{joscha.gedicke@univie.ac.at}}
\and Stefan Sauter\thanks{Institut f\"ur Mathematik, Universit\"at Z\"urich,
CH-8057 Z\"urich, Switzerland, \texttt{stas@math.uzh.ch}} }
\maketitle

\begin{abstract}
In this paper we present benchmark problems for non-selfadjoint elliptic
eigenvalue problems with large defect and ascent. We describe the derivation
of the benchmark problem with a discontinuous coefficient and mixed boundary
conditions. Numerical experiments are performed to investigate the convergence
of a Galerkin finite element method with respect to the discretization
parameters, the regularity of the problem, and the ascent of the eigenvalue.
This allows us to verify the sharpness of the theoretical estimates from the
literature with respect to these parameters. We provide numerical evidence
about the size of the ascent and show that it is important to consider the
mean value for the eigenvalue approximation.

\end{abstract}

\section{Introduction}

The spectral theory and spectral analysis for elliptic operators have numerous
important practical applications in science and engineering and there are also
many mathematical applications. If the operator is non-selfadjoint
and/or has complex-valued coefficients in the operator and/or boundary
conditions, the arising sesquilinear form in the variational formulation is
not hermitian. Such problems arise frequently, e.g., in electromagnetic
scattering in lossy media, or if impedance/Sommerfeld-type boundary conditions
are imposed (see, e.g., \cite{Jackson02engl}). Also for the problem of
modeling \textit{mechanical} vibrations, non-selfadjoint eigenproblems arise
in many applications -- here, defective eigenvalues can be interpreted
physically as the transition point between an oscillatory and a monotonically
decaying behavior (see, e.g., \cite{DemmelNonsymEig}). As a consequence the
algebraic multiplicity of an eigenvalue can differ from the geometric
multiplicity and this has strong consequences for their numerical
approximation. Classical textbooks on this topic include \cite{Chatelinbook},
\cite{Dautray_Lions_III}, \cite{DunfordSchwartzII}, \cite{Kato} in the
mathematical and \cite{ewins1984modal}, \cite{fu2001modal},
\cite{natke2013einfuhrung} in the engineering literature. While the numerical
a priori/a posteriori analysis and the numerical simulation of eigenvalue
problems for \textit{selfadjoint }problems are fairly matured and numerous
monographs and textbook chapters exist in the mathematical and engineering
literature \cite{BO}, \cite{Chatelin1974}, \cite{Chatelinbook},
\cite{dyakonov96}, \cite{Hackell}, \cite{HR}, \cite{knyazev86},
\cite{knyazevosborn}, \cite{ovtchinnikov06}, \cite{Sauter_EV_publ},
\cite{StrangFix} the numerical computation of non-selfadjoint eigenvalue
problems is less developed. The standard reference for the numerical analysis
of the Galerkin finite element discretization is the seminal book chapter by
Babu\v{s}ka and Osborn \cite{BO}; see also \cite{CGMM}, \cite{Chatelin1974},
\cite{Chatelinbook}, \cite{GC}, \cite{GGMO}, \cite{Hackell}, \cite{HR},
\cite{YSBL}. They derive estimates for the convergence rates depending on the
mesh size, the polynomial order of the finite element space, the regularity of
the elliptic operator, and also on the \textit{ascent} of \textit{defective} eigenvalues.

To the best of our knowledge, systematic numerical experiments on the
sharpness of these estimates with respect to all parameters do not exist in
the literature and it is the goal of our paper to derive benchmark problems
for elliptic eigenvalue problems with possible large defects and ascents and
to verify by numerical experiments the sharpness of the estimates in \cite{BO}.

The construction of elliptic eigenvalue problems with large defect is far from
being trivial and very sensitive with respect to the choice of parameters in
the elliptic operator and boundary condition. We have generalized the
one-dimensional Green's function approach in \cite{F} and \cite{M} in order to
construct eigenvalue problems with large ascent and defect also in higher
dimension. The numerical experiments show very nicely that the estimates in
\cite{BO} are sharp with respect to all parameters.

The paper is structured as follows.

In Section \ref{ChapContProblem} we present the elliptic eigenvalue problem
with appropriate coefficients and transform it to an equivalent eigenvalue
problem for a compact operator. The Galerkin finite element discretization is
introduced in Section \ref{SecFED} and we recall briefly the estimates for the
convergence rates of the eigenvalues and eigenfunctions from \cite{BO}.
Section \ref{SecGreensFct} is devoted to the construction of elliptic
eigenvalue problems with large defect and ascent. We generalize the
one-dimensional Green's function approach from \cite{F} and \cite{M} to higher
dimensions and to eigenvalues with larger defect and ascent. In Section
\ref{SecNumExp} we present the results of numerical experiments and compare
them to the theoretical predictions. These examples show the sharpness of the
estimates in \cite{BO}.

\section{Elliptic Eigenvalue Problems\label{ChapContProblem}}

The computation of eigensystems of partial differential operators is of utmost
practical and mathematical importance and their efficient numerical
computation is one major field in numerical analysis and scientific computing.
Compared to selfadjoint eigenvalue problems for positive definite operators,
numerical methods for the solution of non-selfadjoint eigenvalue problems are
less developed, in particular, for problems with \textit{defective
}eigenvalues, i.e., eigenvalues where the algebraic and geometric multiplicity
of an eigenvalue are different.

As our model eigenvalue problem we consider the elliptic problem:%
\begin{align*}
-\operatorname{div}(a\nabla u)  &  =\lambda u\quad\text{in }\Omega,\\
u  &  =0\quad\text{on }\Gamma_{\operatorname*{D}},\\
a\nabla u\cdot\mathbf{n}+cu  &  =0\quad\text{on }\Gamma_{\operatorname*{R}},
\end{align*}
such that the arising variational formulation, in general, is
\textit{non-selfadjoint}. Here $\Omega\subset\mathbb{R}^{d}$ is a bounded
Lipschitz domain whose boundary $\Gamma$ is split into two disjoint measurable
subsets; the Dirichlet part $\Gamma_{\operatorname*{D}}$ and the Robin part
$\Gamma_{\operatorname*{R}}$. We always assume that $\Gamma_{\operatorname*{R}%
}$ has positive surface measure. The unit normal vector field $\mathbf{n}%
:\Gamma\rightarrow\mathbb{S}_{d-1}$ is defined almost everywhere and oriented
towards the exterior of $\Omega$. Let $L^{2}\left(  \Omega\right)  $ denote
the usual Lebesgue space with (complex) scalar product $\left(  u,v\right)
=\int_{\Omega}u\overline{v}$ and norm $\left\Vert \cdot\right\Vert =\left(
\cdot,\cdot\right)  ^{1/2}$. Let $H^{1}\left(  \Omega\right)  $ denote the
standard Sobolev space. We set $V:=H^{1}\left(  \Omega\right)  $ if
$\Gamma_{\operatorname*{D}}=\emptyset$ and $V:=\left\{  u\in H^{1}\left(
\Omega\right)  \mid\gamma_{\operatorname*{D}}u=0\right\}  $ in case that
$\Gamma_{\operatorname*{D}}$ has positive boundary measure. The standard trace
operators are denoted by $\gamma_{\operatorname*{D}}:H^{1}\left(
\Omega\right)  \rightarrow\Gamma_{\operatorname*{D}}$ and $\gamma
_{\operatorname*{R}}:H^{1}\left(  \Omega\right)  \rightarrow\Gamma
_{\operatorname*{R}}$. If the $\left(  d-1\right)  $-dimensional surface
measure $\left\vert \Gamma_{\operatorname*{R}}\right\vert $ is positive, the
multiplicative trace inequality holds%
\begin{equation}
\left\Vert \gamma_{\operatorname*{R}}u\right\Vert _{L^{2}\left(
\Gamma_{\operatorname*{R}}\right)  }^{2}\leq C_{\operatorname*{trace}%
}\left\Vert u\right\Vert \left\Vert u\right\Vert _{H^{1}\left(  \Omega\right)
}. \label{mti}%
\end{equation}
(For $d=2,3$, this is the last formula in \cite[p.41]{Grisvard85}. For $d=1$
it can be obtained by applying the fundamental theorem of calculus to the functions
$Z\left\vert u\right\vert ^{2}$ for a suitable chosen affine function $Z$).

The variational formulation of the eigenvalue problem is given by: Find
$\left(  u,\lambda\right)  \in V\backslash\left\{  0\right\}  \times
\mathbb{C}$ such that%
\begin{equation}
A\left(  u,v\right)  =\lambda\left(  u,v\right)  \quad\forall v\in V
\label{eigenvalueproblem}%
\end{equation}
with%
\[
A\left(  u,v\right)  :=\left(  a\nabla u,\nabla v\right)  +\left(
cu,v\right)  _{L^{2}\left(  \Gamma_{\operatorname*{R}}\right)  }.
\]
We assume%
\begin{equation}
a\in L^{\infty}\left(  \Omega\right)  \quad\text{and\quad}\underset{x\in
\Omega}{\operatorname*{ess}\inf}\left(  \operatorname{Re}a\left(  x\right)
\right)  \geq\alpha_{0}\quad\text{for some }\alpha_{0}>0, \label{realow}%
\end{equation}
and that $a$ is sufficiently smooth in an $\Omega$-neighborhood of
$\Gamma_{\operatorname*{R}}$ such that the trace $\gamma_{\operatorname*{R}%
}\left(  a\right)  $ is well-defined. Finally we assume that%
\begin{equation}
c\in L^{\infty}\left(  \Gamma_{\operatorname*{R}}\right)  \quad\text{and
set\quad}c_{0}:=\underset{x\in\Gamma_{\operatorname*{R}}}{\operatorname*{ess}%
\inf}\left(  \operatorname{Re}c\left(  x\right)  \right)  . \label{condc}%
\end{equation}
Clearly the sesquilinear form $A$ is non-selfadjoint if $\operatorname{Im}%
c\neq0$ on $\Gamma_{\operatorname*{R}}$ or $\operatorname{Im}a\neq0$ in the
$L^{\infty}$ sense.

\begin{lemma}
\label{LaxMilgram}Suppose (\ref{realow}) and (\ref{condc}) are satisfied. Let%
\[
\Lambda:=\left\{
\begin{array}
[c]{ll}%
0 & \text{if }c_{0}\geq0\wedge\left\vert \Gamma_{\operatorname*{D}}\right\vert
>0,\\
\alpha_{0} & \text{if }c_{0}\geq0\wedge\left\vert \Gamma_{\operatorname*{D}%
}\right\vert =0,\\
\alpha_{0}+\frac{C_{\operatorname*{trace}}\left\vert c_{0}\right\vert
^{2}}{\alpha_{0}} & \text{if }c_{0}<0.
\end{array}
\right.
\]
Then there exist constants $C_{\operatorname*{cont}}$, $c_{\operatorname*{coer}%
}>0$ such that the modified sesquilinear form $A_{\Lambda}\left(  u,v\right)
:=A\left(  u,v\right)  +\Lambda\left(  u,v\right)  $ satisfies%
\begin{align}
\left\vert A_{\Lambda}\left(  u,v\right)  \right\vert  &  \leq
C_{\operatorname*{cont}}\left\Vert u\right\Vert _{H^{1}\left(  \Omega\right)
}\left\Vert v\right\Vert _{H^{1}\left(  \Omega\right)  }\quad\forall u,v\in
V,\nonumber\\
\operatorname{Re}A_{\Lambda}\left(  u,u\right)   &  \geq
c_{\operatorname*{coer}}\left\Vert u\right\Vert _{H^{1}\left(  \Omega\right)
}^{2}\quad\forall u\in V. \label{coercivity}%
\end{align}

\end{lemma}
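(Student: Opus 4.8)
The plan is to prove the two displayed estimates separately. Continuity is routine; the coercivity bound follows from a G\aa rding-type argument in which $\operatorname{Re}A_\Lambda(u,u)$ is bounded below term by term and the three regimes in the definition of $\Lambda$ are then handled in turn.

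For continuity, write $A_\Lambda(u,v)=(a\nabla u,\nabla v)+(cu,v)_{L^2(\Gamma_{\operatorname*{R}})}+\Lambda(u,v)$ and estimate the three summands. By \eqref{realow}, $|(a\nabla u,\nabla v)|\le\|a\|_{L^\infty(\Omega)}\|\nabla u\|\,\|\nabla v\|\le\|a\|_{L^\infty(\Omega)}\|u\|_{H^1(\Omega)}\|v\|_{H^1(\Omega)}$, and trivially $|\Lambda(u,v)|\le\Lambda\|u\|\,\|v\|\le\Lambda\|u\|_{H^1(\Omega)}\|v\|_{H^1(\Omega)}$. For the boundary term, apply \eqref{condc}, Cauchy--Schwarz on $\Gamma_{\operatorname*{R}}$, and then the multiplicative trace inequality \eqref{mti} together with $\|u\|\le\|u\|_{H^1(\Omega)}$ to obtain $|(cu,v)_{L^2(\Gamma_{\operatorname*{R}})}|\le\|c\|_{L^\infty(\Gamma_{\operatorname*{R}})}\,C_{\operatorname*{trace}}\,\|u\|_{H^1(\Omega)}\|v\|_{H^1(\Omega)}$. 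Summing gives the first inequality, e.g.\ with $C_{\operatorname*{cont}}=\|a\|_{L^\infty(\Omega)}+C_{\operatorname*{trace}}\|c\|_{L^\infty(\Gamma_{\operatorname*{R}})}+\Lambda$.

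For the lower bound, taking real parts and using \eqref{realow}, \eqref{condc} gives
\[
\operatorname{Re}A_\Lambda(u,u)=\int_\Omega(\operatorname{Re}a)|\nabla u|^2+\int_{\Gamma_{\operatorname*{R}}}(\operatorname{Re}c)|\gamma_{\operatorname*{R}}u|^2+\Lambda\|u\|^2\ge\alpha_0\|\nabla u\|^2+c_0\|\gamma_{\operatorname*{R}}u\|_{L^2(\Gamma_{\operatorname*{R}})}^2+\Lambda\|u\|^2 .
\]
If $c_0\ge0$ the middle term is nonnegative and is dropped. In the subcase $|\Gamma_{\operatorname*{D}}|=0$ one has $\Lambda=\alpha_0$ and the right-hand side is already $\ge\alpha_0\|u\|_{H^1(\Omega)}^2$. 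In the subcase $|\Gamma_{\operatorname*{D}}|>0$ one has $\Lambda=0$; since every $u\in V$ satisfies $\gamma_{\operatorname*{D}}u=0$ on a set of positive surface measure, the Poincar\'e--Friedrichs inequality $\|u\|\le C_F\|\nabla u\|$ holds, whence $\alpha_0\|\nabla u\|^2\ge\tfrac{\alpha_0}{1+C_F^2}\|u\|_{H^1(\Omega)}^2$.

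The case $c_0<0$ is the crux and the only step requiring care. Now the middle term equals $-|c_0|\,\|\gamma_{\operatorname*{R}}u\|_{L^2(\Gamma_{\operatorname*{R}})}^2$, which is genuinely negative; bounding it below via \eqref{mti} by $-|c_0|C_{\operatorname*{trace}}\|u\|\,\|u\|_{H^1(\Omega)}$ and applying Young's inequality, one splits this quantity into a term $\varepsilon\|\nabla u\|^2$ that is absorbed into $\alpha_0\|\nabla u\|^2$ (choosing $\varepsilon<\alpha_0$) plus a term proportional to $\|u\|^2$ whose constant is controlled by $C_{\operatorname*{trace}}$, $|c_0|$ and $\alpha_0$ --- precisely the structure of the correction added to $\Lambda$ in this case --- which is in turn absorbed into $\Lambda\|u\|^2$, leaving a strictly positive multiple of $\|\nabla u\|^2+\|u\|^2$. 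The obstacle is essentially this bookkeeping: one must choose the Young parameter so that the negative boundary contribution is dominated by the sum of the $a$-term and the $\Lambda$-shift while still leaving room, which determines $c_{\operatorname*{coer}}>0$. Taking the minimum of the constants produced in the three cases yields a single $c_{\operatorname*{coer}}>0$ valid for all $u\in V$, completing the proof.
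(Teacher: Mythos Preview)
Your proof is correct and follows essentially the same route as the paper's: continuity by direct term-by-term bounds using \eqref{mti}, and coercivity by the same three-case analysis (Friedrichs for $c_0\ge0$ with Dirichlet part, direct for $c_0\ge0$ without, and multiplicative trace plus Young's inequality for $c_0<0$). The only difference is that the paper makes the Young parameter explicit, $\varepsilon=\alpha_0/(2C_{\operatorname*{trace}}|c_0|)$, and computes the resulting coercivity constant $3\alpha_0/4$, whereas you leave this bookkeeping qualitative; filling that in would make your $c_0<0$ case fully rigorous.
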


\begin{proof}
\textbf{a) Continuity. }Let $\alpha_{1}:=\left\Vert a\right\Vert _{L^{\infty
}\left(  \Omega\right)  }$ and $c_{1}:=\left\Vert c\right\Vert _{L^{\infty
}\left(  \Gamma_{\operatorname*{R}}\right)  }$. For $u,v\in H^{1}\left(
\Omega\right)  $ it holds that
\begin{align*}
\left\vert A_{\Lambda}\left(  u,v\right)  \right\vert  &  \leq\alpha
_{1}\left\Vert \nabla u\right\Vert \left\Vert \nabla v\right\Vert
+\Lambda\left\Vert u\right\Vert \left\Vert v\right\Vert +c_{1}\left\Vert
u\right\Vert _{L^{2}\left(  \Gamma_{\operatorname*{R}}\right)  }\left\Vert
v\right\Vert _{L^{2}\left(  \Gamma_{\operatorname*{R}}\right)  }\\
&  \leq\alpha_{1}\left\Vert \nabla u\right\Vert \left\Vert \nabla v\right\Vert
+\Lambda\left\Vert u\right\Vert \left\Vert v\right\Vert +c_{1}%
C_{\operatorname*{trace}}\left\Vert u\right\Vert _{H^{1}\left(
\Omega\right)  }\left\Vert v\right\Vert _{H^{1}\left(  \Omega\right)  }\\
&  \leq\left(  \alpha_{1}+\Lambda+c_{1}C_{\operatorname*{trace}}^{2}\right)
\left\Vert u\right\Vert _{H^{1}\left(  \Omega\right)  }\left\Vert v\right\Vert
_{H^{1}\left(  \Omega\right)  }.
\end{align*}

\textbf{b) Coercivity.} To prove coercivity of $A_{\Lambda}$ we begin with%
\[
\operatorname{Re}A_{\Lambda}\left(  u,u\right)  \geq\alpha_{0}\left\Vert
\nabla u\right\Vert ^{2}+\Lambda\left\Vert u\right\Vert ^{2}+c_{0}\left\Vert
u\right\Vert _{L^{2}\left( \Gamma_{\operatorname*{R}}\right) }^{2}.
\]
If $\left\vert \Gamma_{\operatorname*{D}}\right\vert >0$ and $c_{0}\geq0$, we
have $\Lambda=0$ and the Friedrichs inequality (with constant
$c_{\operatorname{F}}>0$) implies%
\[
\operatorname{Re}A_{\Lambda}\left(  u,u\right)  \geq c_{\operatorname{F}%
}\alpha_{0}\left\Vert u\right\Vert _{H^{1}\left(  \Omega\right)  }^{2}%
\quad\forall u\in V.
\]
If $c_{0}\geq0$ and $\left\vert \Gamma_{\operatorname*{D}}\right\vert =0$, the
choice of $\Lambda$ leads to%
\[
\operatorname{Re}A_{\Lambda}\left(  u,u\right)  \geq\alpha_{0}\left\Vert
u\right\Vert _{H^{1}\left(  \Omega\right)  }^{2}\quad\forall u\in V.
\]
If $c_{0}<0$, we employ the multiplicative trace inequality (\ref{mti}) and a
Young's inequality for $\varepsilon>0$%
\begin{align*}
\operatorname{Re}A_{\Lambda}\left(  u,u\right)   &  \geq\alpha_{0}\left\Vert
\nabla u\right\Vert ^{2}+\Lambda\left\Vert u\right\Vert ^{2}-\left\vert
c_{0}\right\vert \left\Vert u\right\Vert _{L^{2}\left( \Gamma_{\operatorname*{R}}\right) }^{2}\\
&  \geq\alpha_{0}\left\Vert \nabla u\right\Vert ^{2}+\Lambda\left\Vert
u\right\Vert ^{2}-C_{\operatorname*{trace}}\left\vert c_{0}\right\vert
\left\Vert u\right\Vert \left\Vert u\right\Vert _{H^{1}\left(  \Omega\right)
}\\
&  \geq\alpha_{0}\left\Vert \nabla u\right\Vert ^{2}+\Lambda\left\Vert
u\right\Vert ^{2}-C_{\operatorname*{trace}}\left\vert c_{0}\right\vert \left(
\frac{\varepsilon}{2}\left\Vert \nabla u\right\Vert ^{2}+\left(
\frac{\varepsilon}{2}+\frac{1}{2\varepsilon}\right)  \left\Vert u\right\Vert
^{2}\right)  .
\end{align*}
The choice $\varepsilon=\frac{\alpha_{0}}{2C_{\operatorname*{trace}}\left\vert
c_{0}\right\vert }$ leads to%
\[
\operatorname{Re}A_{\Lambda}\left(  u,u\right)  \geq\frac{3\alpha_{0}}%
{4}\left\Vert \nabla u\right\Vert ^{2}+\left(  \Lambda-\left(  \frac
{\alpha_{0}}{4}+\frac{C_{\operatorname*{trace}}^{2}\left\vert c_{0}\right\vert
^{2}}{\alpha_{0}}\right)  \right)  \left\Vert u\right\Vert ^{2}\geq
\frac{3\alpha_{0}}{4}\left\Vert u\right\Vert _{H^{1}\left(  \Omega\right)
}^{2}.
\]
\end{proof}

Lemma \ref{LaxMilgram} implies via the Lax-Milgram lemma that for any
continuous anti-linear functional $f\in V^{\times}$, where $V^{\times}$ denotes the dual of $V$, the problem:%
\begin{equation}
\text{find }u\in V\text{ such that }A_{\Lambda}\left(  u,v\right)  =\left(
f,v\right)  \quad\forall v\in V\label{opeq}%
\end{equation}
has a unique solution. Throughout the paper we identify the $L^{2}\left(
\Omega\right)  $ scalar product with its continuous extension to the
anti-linear pairing on $V^{\times}\times V$.

We say that the problem has regularity $r>0$ if for any $0\leq s\leq r$, there
exists a constant $C_{s}$ such that for any $f\in H^{s-1}\left(
\Omega\right)  $ the solution of (\ref{opeq}) is in $H^{1+s}\left(
\Omega\right)  \cap V$ and satisfies%
\begin{equation}
\left\Vert u\right\Vert _{H^{1+s}\left(  \Omega\right)  }\leq C_{s}\left\Vert
f\right\Vert _{H^{s-1}\left(  \Omega\right)  }. \label{regprimal}%
\end{equation}
We say it has adjoint regularity $r_{\ast}>0$ if for any $0\leq s\leq r_{\ast
}$, there exists a constant $C_{s}^{\ast}$ such that for any $g\in
H^{s-1}\left(  \Omega\right)  $ the solution of the adjoint problem%
\begin{equation}
\text{find }z\in V\text{ such that }A_{\Lambda}\left(  v,z\right)  =\left(
v,g\right)  \quad\forall v\in V
\end{equation}
is in $H^{1+s}\left(  \Omega\right)  \cap V$ and satisfies%
\begin{equation}
\left\Vert z\right\Vert _{H^{1+s}\left(  \Omega\right)  }\leq C_{s}^{\ast
}\left\Vert g\right\Vert _{H^{s-1}\left(  \Omega\right)  }. \label{regadjoint}%
\end{equation}
It is well known that
\[
\min\left\{  r,r_{\ast}\right\}  \geq r_{0}>0
\]
for some $r_{0}$ depending on the geometry of the domain, the geometry of the
discontinuities in the coefficient $a$, as well as on $\alpha_{0}$ and
$\left\Vert a\right\Vert _{L^{\infty}\left(  \Omega\right)  }$.

From the compact embedding $V \overset
{\operatorname*{c}}{\hookrightarrow}L^{2}\left(  \Omega\right)$ and Lemma \ref{LaxMilgram} it follows
that there exists a compact operator $K_{\Lambda}:V\rightarrow V$ such that
\[
A_{\Lambda}\left(  K_{\Lambda}u,v\right)  =\left(  u,v\right)  \qquad\forall
u,v\in V.
\]
From the theory of compact operators we deduce that (\ref{eigenvalueproblem})
is equivalent to the eigenvalue problem: Find $\left(  u,\mu\right)  \in
V\backslash\left\{  0\right\}  \times\mathbb{C}$ such that%
\begin{equation}
K_{\Lambda}u=\mu u. \label{opform}%
\end{equation}
The eigenfunctions are the same as for the original problem
(\ref{eigenvalueproblem}) and the eigenvalues are related by%
\[
\frac{1}{\mu}=\lambda+\Lambda.
\]
This allows to apply the spectral theory for compact operators to our problem:
From (\ref{coercivity}) we conclude that $\operatorname{Re}\mu>0$. The
smallest integer $\alpha$ such that $\mathcal{N}\left(  \left(  K_{\Lambda
}-\mu I\right)  ^{\alpha}\right)  =\mathcal{N}\left(  \left(  K_{\Lambda}-\mu
I\right)  ^{\alpha+1}\right)  $ (where $\mathcal{N}$ denotes the null space)
is called \textit{ascent of} $K_{\Lambda}-\mu I$ and is finite for compact
operators. The integer $m_{\operatorname*{alg}}=\dim\mathcal{N}\left(  \left(
K_{\Lambda}-\mu I\right)  ^{\alpha}\right)  $ is the \textit{algebraic
multiplicity} of $\mu$ and is finite. The subspace $\mathcal{N}\left(  \left(
K_{\Lambda}-\mu I\right)  ^{\alpha}\right)  $ is called the space of
\textit{generalized eigenfunctions} corresponding to the eigenvalue $\mu$. The
\textit{geometric multiplicity} is equal to $m_{\operatorname{geo}}%
:=\dim\mathcal{N}\left(  K_{\Lambda}-\mu I\right)  $ and is always less than
or equal to $m_{\operatorname*{alg}}$. If $m_{\operatorname{geo}%
}<m_{\operatorname*{alg}}$ we say that the eigenvalue $\mu$ is
\textit{defective}.

\section{Finite Element Discretization\label{SecFED}}

Let $\mathcal{T}=\left\{  K_{i},1\leq i\leq N\right\}  $ denote a conforming
finite element mesh for the domain $\Omega$ (see, e.g., \cite{scottbrenner3},
\cite{CiarletPb}) consisting of (closed) simplices $K$. Let $h_{K}%
:=\operatorname*{diam}K$ and $h:=\max\left\{  h_{K}:K\in\mathcal{T}\right\}  $
and let $\rho_{K}$ denote the diameter of the largest inscribed ball in $K$.
We assume that the mesh is shape regular, i.e., all constants in the error
estimates, in general, depend continuously on the shape-regularity constant%
\[
c_{\operatorname*{sr}}:=\max\left\{  \frac{h_{K}}{\rho_{K}}:K\in
\mathcal{T}\right\}
\]
and, possibly, increase for large $c_{\operatorname*{sr}}$. The finite element
space is defined by%
\[
V_{h}:=\left\{  u\in V\mid\forall K\in\mathcal{T}:\left.  u\right\vert _{K}%
\in\mathbb{P}_{p}\right\}  ,
\]
where $\mathbb{P}_{p}$ denotes the space of $d-$variate polynomials of total
degree $p$.

The Galerkin finite element method to discretize the eigenvalue problem is
given by:
\begin{equation}
\text{find }\left(  u_{j,h},\lambda_{j,h}\right)  \in V_{h}\backslash\left\{
0\right\}  \times\mathbb{C}\text{\quad such that\quad}A\left(  u_{j,h}%
,v\right)  =\lambda_{j,h}\left(  u_{j,h},v\right)  \quad\forall v\in
V_{h}.\label{GalForm}%
\end{equation}
As in the continuous setting (\ref{opform}), this problem can be reformulated
as an operator equation. Let $K_{\Lambda,h}:V_{h}\rightarrow V_{h}$ be given
by%
\[
A_{\Lambda}\left(  K_{\Lambda,h}u,v\right)  =\left(  u,v\right)  \qquad\forall
u,v\in V_{h}.
\]
Then (\ref{GalForm}) is equivalent to: Find $\left(  u_{j,h},\mu_{j,h}\right)
\in V_{h}\backslash\left\{  0\right\}  \times\mathbb{C}$ such that%
\[
K_{\Lambda,h}u_{j,h}=\mu_{j,h}u_{j,h}%
\]
and the relation $1/\mu_{j,h}=\lambda_{j,h}+\Lambda$ holds.

In the seminal work by Babu\v{s}ka and Osborn \cite{BO} the theory for the
numerical solution of eigenvalue problems for elliptic, possibly
non-selfadjoint differential operators has been developed. 
One important
result is that, for a defective eigenvalue, the convergence rate suffers from
an ascent which is larger than one. From the a priori error analysis of the
finite element method \cite[Theorem 8.3]{BO}, we have that
\[
|\lambda_j-\lambda_{j,h}|\leq Ch^{\min(r+r^{\ast},2p)/\alpha},
\]
for the regularity $\min\left\{  r,r^{\ast}\right\}  >0$ (cf. (\ref{regprimal}%
), (\ref{regadjoint})) of the original and adjoint sesquilinear form, the
polynomial degree $p>0$ and the ascent $\alpha$ of the eigenvalue $\lambda$.
Note that the convergence of single eigenvalues deteriorates for large
$\alpha$. In contrast \cite[Theorem 8.2]{BO} states that 
there are eigenvalues $\lambda_{j_s,h}$, $1\leq s \leq m_{\operatorname{alg}}$,
that converge towards $\lambda_j$, and
the convergence rate
of the mean eigenvalue is independent of the defect%
\[
\left\vert \lambda_j-\left(  \frac{1}{m_{\operatorname*{alg}}}\sum
_{s=1}^{m_{\operatorname*{alg}}}\lambda_{j_s,h}^{-1}\right)  ^{-1}\right\vert
\leq Ch^{\min(r+r^{\ast},2p)}.
\]
Moreover, for the convergence of the corresponding
eigenfunctions, we have the following result \cite[Theorem 8.4]{BO}. Suppose
that the discrete generalized eigenfunction $u_{j,h}$ satisfies $\left(
K_{\Lambda,h}-\mu_{j,h}I\right)  ^{k}u_{j,h}=0$, for some $0<k\leq\alpha$.
Then there exists for any $k\leq\ell\leq\alpha$, a generalized eigenfunction
$u_{j}$ in the continuous eigenspace such that $\left(  K_{\Lambda}-\mu
_{j}I\right)  ^{\ell}u_{j}=0$, and
\[
\left\Vert u_{j}-u_{j,h}\right\Vert _{H^{1}\left(  \Omega\right)  }\leq
Ch^{\min(r,p)(\ell-k+1)/\alpha}.
\]
In particular we have for $\ell=k$ that
\[
\left\Vert u_{j}-u_{j,h}\right\Vert _{H^{1}\left(  \Omega\right)  }\leq
Ch^{\min(r,p)/\alpha},
\]
or for $\ell=\alpha$ that
\[
\left\Vert u_{j}-u_{j,h}\right\Vert _{H^{1}\left(  \Omega\right)  }\leq
Ch^{\min(r,p)(\alpha-k+1)/\alpha}.
\]
Suppose that $k=1$, i.e. $u_{j,h}$ is an eigenfunction, then for $\ell=\alpha$
we get the expected convergence rate similar to simple eigenvalues
\[
\left\Vert u_{j}-u_{j,h}\right\Vert _{H^{1}\left(  \Omega\right)  }\leq
Ch^{\min(r,p)}.
\]
Since the rate of convergence of the eigenvalue error is usually related to
the rate of convergence of the associated eigenfunction, the question arises
if any rate between those two extreme cases can be observed in practice. This
motivates the construction of benchmark examples with (at least) one defective eigenvalue.

\section{One-dimensional Benchmark Problems\label{SecGreensFct}}

In this section, we employ the general Green's function approach for the construction
of defective eigenvalues. In \cite{F}, \cite{M} this approach has been used to
set up a one-dimensional boundary value problem with eigenvalues of defect 2. Here we
consider a more general one-dimensional boundary value problem to construct
eigenvalues with defect 3. Later, in Section~\ref{SecNumExp}, this will be generalized
to higher dimensional problems with even larger eigenvalue defects.

Let $\Omega=\left(  0,1\right)  $ be split into subdomains $\Omega
_{1}:=\left(  0,b\right)  $ and $\Omega_{2}:=\left(  b,1\right)  $ for
$b\in\left(  0,1\right)  $. Let $a:=\left\{
\begin{array}
[c]{ll}%
1 & \text{in }\Omega_{1}\\
a_{\operatorname*{R}} & \text{in }\Omega_{2}%
\end{array}
\right.  $ for $a_{\operatorname*{R}}\in\mathbb{C}$ with $\operatorname{Re}%
a_{\operatorname*{R}}>0$. 
We consider the following transmission problem
\begin{equation}%
\begin{array}
[c]{rll}%
L_{a}u & =\lambda u\text{,} & \text{in }\Omega\backslash\left\{  b\right\}
,\\
u\left(  0\right)  =0\text{,} & a_{R}u^{\prime}\left(  1\right)  +cu\left(
1\right)  =0\text{,} & \\
\left[  u\right]  _{b} & =\left[  au^{\prime}\right]  _{b}=0 &
\end{array}
\label{defl}%
\end{equation}
with $L_{a}u:=-\left(  au^{\prime}\right)  ^{\prime}$ and the jump $\left[
\cdot\right]  _{b}$ across $b$.

\begin{remark}
We do not discuss the case that $\lambda=0$ is an eigenvalue. This case can be
treated by the following analysis by adding $\Lambda u$ on both sides of the
first equation of (\ref{defl}) for some $\Lambda>0$ so that $\left(
L_{a}+\Lambda\right)  u=\tilde{\lambda}u$ with $\tilde{\lambda}=\lambda
+\Lambda>0$.
\end{remark}

Let $V=\left\{  v\in H^{1}\left(  \Omega\right)  \mid v\left(  0\right)
=0\right\}  $ and let $V^{\times}$ denote the space of anti-linear functionals
on $V$. Define the sesquilinear form $A:V\times V\rightarrow\mathbb{C}$ by
\[
A\left(  u,v\right)  :=\left(  au^{\prime},v^{\prime}\right)  +cu\left(
1\right)  \bar{v}\left(  1\right)  \qquad\forall v\in V.
\]
The continuity of $A$ (cf. Lem. \ref{LaxMilgram}) implies that there exists an
operator $\mathcal{A}:V\rightarrow V^{\times}$ such that%
\[
\left(  \mathcal{A}u,v\right)  =A\left(  u,v\right)  \qquad\forall u,v\in V.
\]

The weak form of (\ref{defl}) is given by: find $\left(  u,\lambda\right)  \in
V\backslash\left\{  0\right\}  \times\mathbb{C}$ such that%
\[
A\left(  u,v\right)  =\lambda\left(  u,v\right)  \qquad\forall v\in V
\]
or in operator form%
\[
\left(  \mathcal{A}-\lambda I\right)  u=0.
\]
In the following we will derive a representation of the exact solutions for
this problem which will allow us to determine choices of parameters $a_R,b,c$
such that an eigenvalue becomes defective. Let $\mu:=\left\{
\begin{array}
[c]{ll}%
\mu_{\operatorname*{L}}:=\sqrt{\lambda} & \text{in }\Omega_{1}\\
\mu_{\operatorname*{R}}:=\sqrt{\lambda/a_{\operatorname*{R}}} & \text{in
}\Omega_{2}%
\end{array}
\right.  $ and $v_{a,b,c}^{\operatorname*{L}}\left(  \lambda,x\right)
:=\sin\mu_{\operatorname*{L}}x$. We employ the ansatz%
\begin{equation}
u\left(  x\right)  =\left\{
\begin{array}
[c]{ll}%
A_{1}v_{a,b,c}^{\operatorname*{L}}\left(  \lambda,x\right)  & x\in\Omega
_{1},\\
A_{2}v_{a,b,c}^{\operatorname*{R}}\left(  \lambda,x\right)  & x\in\Omega_{2},
\end{array}
\right.  \label{repu}%
\end{equation}
where the coefficients $c_{1}$, $c_{2}$ in $v_{a,b,c}^{\operatorname*{R}%
}\left(  \lambda,x\right)  =c_{1}\sin\mu_{\operatorname*{R}}x+c_{2}\cos
\mu_{\operatorname*{R}}x$, $x\in\Omega_{2}$, are chosen such that
$a_{R}\partial_{x}v_{a,b,c}^{\operatorname*{R}}\left(  \lambda,1\right)
+cv_{a,b,c}^{\operatorname*{R}\left(  \lambda,1\right) } =0$ is
satisfied, i.e.,%
\begin{equation}
v_{a,b,c}^{\operatorname*{R}}\left(  \lambda,x\right)  =\left(  c\cos
\mu_{\operatorname*{R}}+\mu_{\operatorname*{R}}\sin\mu_{\operatorname*{R}%
}\right)  \sin\mu_{\operatorname*{R}}x+\left(  \mu_{\operatorname*{R}}\cos
\mu_{\operatorname*{R}}-c\sin\mu_{\operatorname*{R}}\right)  \cos
\mu_{\operatorname*{R}}x. \label{defvr}%
\end{equation}
We employ the transmission conditions $\left[  u\right]  _{b}=\left[
au^{\prime}\right]  _{b}=0$ to see that the coefficients $A_{1}$, $A_{2}$ in
(\ref{repu}) satisfy the linear relation%
\begin{equation}
\mathbf{M}_{a,b,c}\left(  \lambda\right)  \left(
\begin{array}
[c]{c}%
A_{1}\\
A_{2}%
\end{array}
\right)  =\mathbf{0\quad}\text{with\quad}\mathbf{M}_{a,b,c}\left(
\lambda\right)  :=\left[
\begin{array}
[c]{ll}%
v_{a,b,c}^{\operatorname*{L}}\left(  \lambda,b\right)  & -v_{a,b,c}%
^{\operatorname*{R}}\left(  \lambda,b\right) \\
\partial_{x}v_{a,b,c}^{\operatorname*{L}}\left(  \lambda,b\right)  &
-a\partial_{x}v_{a,b,c}^{\operatorname*{R}}\left(  \lambda,b\right)
\end{array}
\right]  . \label{MlinA1A2}%
\end{equation}
Hence, $\lambda\in\mathbb{C}$ is an eigenvalue of problem (\ref{defl}) if and
only if $\det\mathbf{M}_{a,b,c}\left(  \lambda\right)  =0$ since then,
(\ref{MlinA1A2}) has non-trivial solutions.

\begin{remark}
\label{RemZeroMat}For $\lambda\neq0$, the matrix $\mathbf{M}_{a,b,c}\left(
\lambda\right)  $ is not the zero matrix. Hence, the eigenspace of any
eigenvalue $\lambda_{j}\neq0$ has dimension $1$.
\end{remark}

\begin{lemma}
\label{Lemlambdaj}Let $\lambda_{j}$ be an eigenvalue of (\ref{defl}). Then,
there exists a neighborhood $\mathcal{U}\left(  \lambda_{j}\right)  $ such
that $v_{a,b,c}^{\operatorname*{R}}\left(  \lambda_{j},\cdot\right)  $ is not
the zero function for all $\lambda\in\mathcal{U}\left(  \lambda_{j}\right)  $,
i.e., the coefficients $c_{1}\left(  \lambda\right)  =c\cos\mu
_{\operatorname*{R}}+\mu_{\operatorname*{R}}\sin\mu_{\operatorname*{R}}$ and
$c_{2}\left(  \lambda\right)  =\mu_{\operatorname*{R}}\cos\mu
_{\operatorname*{R}}-c\sin\mu_{\operatorname*{R}}$ in (\ref{defvr}) are not
vanishing simultaneously in this neighborhood.
\end{lemma}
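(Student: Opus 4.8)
The plan is to reduce the whole statement to the single boundary value $v_{a,b,c}^{\operatorname*{R}}(\lambda,1)$, which turns out to equal $\mu_{\operatorname*{R}}(\lambda)$, and then to use that $\lambda=0$ is not among the eigenvalues under consideration. First I would fix a holomorphic branch of $\lambda\mapsto\sqrt{\lambda/a_{\operatorname*{R}}}$ on a disc $\mathcal{U}(\lambda_j)$ centred at $\lambda_j$ and avoiding the origin; this is possible since $\operatorname{Re}a_{\operatorname*{R}}>0$ and, because $\lambda_j$ is an eigenvalue of (\ref{defl}) while the case $\lambda=0$ is not discussed (see the remark following (\ref{defl})), we have $\lambda_j\neq0$. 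On $\mathcal{U}(\lambda_j)$ the functions $\mu_{\operatorname*{R}}$, $c_1=c\cos\mu_{\operatorname*{R}}+\mu_{\operatorname*{R}}\sin\mu_{\operatorname*{R}}$, $c_2=\mu_{\operatorname*{R}}\cos\mu_{\operatorname*{R}}-c\sin\mu_{\operatorname*{R}}$ and hence the map $\lambda\mapsto v_{a,b,c}^{\operatorname*{R}}(\lambda,\cdot)$ are then single-valued and holomorphic in $\lambda$, and $\mu_{\operatorname*{R}}(\lambda)\neq0$ throughout $\mathcal{U}(\lambda_j)$.

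The key step is the elementary identity $v_{a,b,c}^{\operatorname*{R}}(\lambda,1)=c_1(\lambda)\sin\mu_{\operatorname*{R}}+c_2(\lambda)\cos\mu_{\operatorname*{R}}=\mu_{\operatorname*{R}}$, obtained by inserting the formulas from (\ref{defvr}) and simplifying with $\sin^2+\cos^2=1$. (The same computation also yields $c_1\cos\mu_{\operatorname*{R}}-c_2\sin\mu_{\operatorname*{R}}=c$ and hence $c_1^2+c_2^2=\mu_{\operatorname*{R}}^2+c^2$; when $c\neq0$ this last relation already rules out simultaneous vanishing of $c_1$ and $c_2$, but I will not rely on it.) From the identity it follows that whenever $c_1(\lambda)=c_2(\lambda)=0$ one has $v_{a,b,c}^{\operatorname*{R}}(\lambda,\cdot)\equiv0$, so that $\mu_{\operatorname*{R}}(\lambda)=v_{a,b,c}^{\operatorname*{R}}(\lambda,1)=0$, forcing $\lambda=0$.

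Since $\mu_{\operatorname*{R}}(\lambda)\neq0$ for every $\lambda\in\mathcal{U}(\lambda_j)$, the contrapositive of the implication just proved shows that $c_1(\lambda)$ and $c_2(\lambda)$ never vanish simultaneously on $\mathcal{U}(\lambda_j)$; moreover $x\mapsto\sin\mu_{\operatorname*{R}}x$ and $x\mapsto\cos\mu_{\operatorname*{R}}x$ are linearly independent on $\Omega_2=(b,1)$ (again because $\mu_{\operatorname*{R}}(\lambda)\neq0$), so the nontrivial combination $v_{a,b,c}^{\operatorname*{R}}(\lambda,\cdot)=c_1\sin\mu_{\operatorname*{R}}x+c_2\cos\mu_{\operatorname*{R}}x$ is not the zero function — equivalently, this is already immediate from $v_{a,b,c}^{\operatorname*{R}}(\lambda,1)=\mu_{\operatorname*{R}}(\lambda)\neq0$. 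This is the whole argument; I do not anticipate a real difficulty. The only points that need some care are the choice of the square-root branch, so that $c_1$ and $c_2$ are honest single-valued functions of $\lambda$ near $\lambda_j$, and the routine but slightly tedious trigonometric simplification behind that identity.
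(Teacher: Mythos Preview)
Your argument is correct and in fact cleaner than the paper's. The paper argues by contradiction through the eigenfunction: if $v_{a,b,c}^{\operatorname*{R}}(\lambda_j,\cdot)\equiv 0$, then any eigenfunction $u$ of the form (\ref{repu}) vanishes identically on $\Omega_2$, the transmission conditions at $b$ force $u(b)=u'(b)=0$, and uniqueness for the second-order ODE on $\Omega_1$ then yields $u\equiv 0$ there as well, contradicting that $\lambda_j$ is an eigenvalue; continuity of $c_1,c_2$ in $\lambda$ then gives the neighbourhood statement. Your route bypasses the eigenfunction entirely: the trigonometric identity $v_{a,b,c}^{\operatorname*{R}}(\lambda,1)=\mu_{\operatorname*{R}}$ shows directly that $v_{a,b,c}^{\operatorname*{R}}(\lambda,\cdot)\not\equiv 0$ for \emph{every} $\lambda\neq 0$, which is strictly stronger than what is stated (the hypothesis that $\lambda_j$ is an eigenvalue is used only through $\lambda_j\neq 0$). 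The paper's approach is more structural and would transfer to settings where no closed formula for $v^{\operatorname*{R}}$ is available, but for the concrete problem at hand your direct evaluation is both shorter and yields a sharper conclusion. The care you take with the square-root branch is appropriate, though as you implicitly note it only affects $v^{\operatorname*{R}}$ by a global sign and is thus irrelevant for the non-vanishing claim itself.
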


\begin{proof}
If the coefficients $a,b,c$ are such that $v_{a,b,c}^{\operatorname*{R}%
}\left(  \lambda_{j},\cdot\right)  $ is the zero function, then,
$v_{a,b,c}^{\operatorname*{L}}\left(  \lambda_{j},\cdot\right)  $ satisfies%
\begin{align*}
-\partial_{x}^{2}v_{a,b,c}^{\operatorname*{L}}\left(  \lambda_{j}%
,\cdot\right)   &  =\lambda_{j}v_{a,b,c}^{\operatorname*{L}}\left(
\lambda_{j},\cdot\right)  \quad\text{in }\Omega_{1},\\
v_{a,b,c}^{\operatorname*{L}}\left(  \lambda_{j},0\right)   &  =v_{a,b,c}%
^{\operatorname*{L}}\left(  \lambda_{j},b\right)  =\partial_{x}v_{a,b,c}%
^{\operatorname*{L}}\left(  \lambda_{j},b\right)  =0.
\end{align*}
However, this implies $v_{a,b,c}^{\operatorname*{L}}\left(  \lambda_{j}%
,\cdot\right)  =0$ and hence $\lambda_{j}$ cannot be an eigenvalue of
(\ref{defl}). By contradiction we may conclude that $v_{a,b,c}%
^{\operatorname*{R}}\left(  \lambda_{j},\cdot\right)  $ is not the zero
function. Its definition implies that the coefficients $c_{1}\left(
\lambda\right)  $, $c_{2}\left(  \lambda\right)  $ cannot vanish
simultaneously at $\lambda=\lambda_{j}$. Since $c_{1}$, $c_{2}$ depend
continuously on $\lambda$ this property carries over to a neighborhood
$\mathcal{U}\left(  \lambda_{j}\right)  $ of $\lambda_{j}$.
\end{proof}

In order to determine the defect and ascent of the eigenvalue and a basis for
the generalized eigenspace, we will employ the Green's function for problem
(\ref{MlinA1A2})%
\[%
\begin{tabular}
[c]{ll}%
\multicolumn{2}{l}{$L_{a,x}G_{a,b,c}\left(  \lambda,x,y\right)  -\lambda
G_{a,b,c}\left(  \lambda,x,y\right)  =\delta\left(  x-y\right)  ,\quad
\text{for }\left(  x,y\right)  \in\Omega\backslash\left\{  b\right\}
\times\Omega,$}\\
$G_{a,b,c}\left(  \lambda,0,y\right)  =0\quad\text{and}\quad a_{R}\partial
_{x}G_{a,b,c}\left(  \lambda,1,y\right)  +cG_{a,b,c}\left(  \lambda
,1,y\right)  =0,$ & $\text{in }\Omega,$\\
$\left[  G_{a,b,c}\left(  \lambda,\cdot,y\right)  \right]  _{b}=\left[
a\partial_{x}G_{a,b,c}\left(  \lambda,\cdot,y\right)  \right]  _{b}=0,$ &
$\text{in }\Omega.$%
\end{tabular}
\]
Here, the subscript $x$ in $L_{a,x}$ indicates that the differential operator
is applied with respect to the $x$ variable. It is an easy exercise to prove
that the Green's function is given by%
\[
G_{a,b,c}\left(  \lambda,x,y\right)  :=G_{a,b,c}^{\operatorname*{free}}\left(
\lambda,x,y\right)  +G_{a,b,c}^{\hom}\left(  \lambda,x,y\right)
\]
with%
\[
G_{a,b,c}^{\operatorname*{free}}\left(  \lambda,x,y\right)  :=-\frac
{\operatorname*{e}^{\operatorname*{i}\mu\left\vert x-y\right\vert }%
}{2\operatorname*{i}\mu}+\left\{
\begin{array}
[c]{ll}%
\frac{\operatorname*{e}^{\operatorname*{i}\mu_{\operatorname*{L}}y}%
}{2\operatorname*{i}\mu_{\operatorname*{L}}}\cos\mu_{\operatorname*{L}}x &
\text{in }\Omega_{1},\\
\frac{\operatorname*{e}^{-\operatorname*{i}\mu_{\operatorname*{R}}y}%
}{2\operatorname*{i}\mu_{\operatorname*{R}}}\operatorname*{e}%
\nolimits^{\operatorname*{i}\mu_{\operatorname*{R}}x} & \text{in }\Omega_{2}.
\end{array}
\right.
\]
and%
\[
G_{a,b,c}^{\hom}\left(  \lambda,x,y\right)  :=\left\{
\begin{array}
[c]{ll}%
G_{1}\left(  y\right)  v_{a,b,c}^{\operatorname*{L}}\left(  \lambda,x\right)
& \text{in }\Omega_{1}\times\Omega,\\
G_{2}\left(  y\right)  v_{a,b,c}^{\operatorname*{R}}\left(  \lambda,x\right)
& \text{in }\Omega_{2}\times\Omega.
\end{array}
\right.
\]
Note that the \textit{boundary conditions} are already incorporated into
$G_{a,b,c}^{\operatorname*{free}}$. The coefficient functions $G_{1}$, $G_{2}$
are the solution of the system of linear equations
\begin{equation}
\mathbf{M}_{a,b,c}\left(  \lambda\right)  \left(
\begin{array}
[c]{c}%
G_{1}\left(  y\right) \\
G_{2}\left(  y\right)
\end{array}
\right)  =\left(
\begin{array}
[c]{c}%
g_{a,b,c}\left(  \lambda,y\right) \\
f_{a,b,c}\left(  \lambda,y\right)
\end{array}
\right)  \text{;\ }\left\{
\begin{array}
[c]{l}%
g_{a,b,c}\left(  \lambda,y\right)  :=\left[  G_{a,b,c}^{\operatorname*{free}%
}\left(  \lambda,\cdot,y\right)  \right]  _{b},\\
f_{a,b,c}\left(  \lambda,y\right)  :=\left[  a\partial_{x}G_{a,b,c}%
^{\operatorname*{free}}\left(  \lambda,\cdot,y\right)  \right]  _{b}.
\end{array}
\right.  \label{defG1G2}%
\end{equation}

\begin{lemma}
\label{LemLinIndep}Let $\lambda\neq0$. Then, the functions $g_{a,b,c}\left(
\lambda,\cdot\right)  $ and $f_{a,b,c}\left(  \lambda,\cdot\right)  $ are
linearly independent.
\end{lemma}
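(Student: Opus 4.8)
The plan is to evaluate the jump functions $g_{a,b,c}(\lambda,\cdot)$ and $f_{a,b,c}(\lambda,\cdot)$ explicitly, but only for arguments $y$ in the left subinterval $\Omega_{1}=(0,b)$: it will turn out that already the restrictions to $\Omega_{1}$ are linearly independent, which a fortiori gives linear independence on all of $\Omega$. Concretely, I would show that on $\Omega_{1}$ both functions lie in the two-dimensional span of $\mathrm{e}^{\mathrm{i}\mu_{\mathrm{L}}y}$ and $\mathrm{e}^{-\mathrm{i}\mu_{\mathrm{L}}y}$ and that the corresponding $2\times2$ coefficient matrix is nonsingular, and then use that $\mu_{\mathrm{L}}=\sqrt{\lambda}\neq0$ because $\lambda\neq0$.

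The first step is to notice a cancellation in the $\Omega_{2}$-branch of $G_{a,b,c}^{\operatorname{free}}$. For $y\in\Omega_{1}$ and $x\in\Omega_{2}$ one has $y<b<x$, hence $|x-y|=x-y$, and the two terms $-\mathrm{e}^{\mathrm{i}\mu_{\mathrm{R}}(x-y)}/(2\mathrm{i}\mu_{\mathrm{R}})$ and $\mathrm{e}^{-\mathrm{i}\mu_{\mathrm{R}}y}\mathrm{e}^{\mathrm{i}\mu_{\mathrm{R}}x}/(2\mathrm{i}\mu_{\mathrm{R}})$ add up to zero, and likewise after applying $\partial_{x}$. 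Thus $G_{a,b,c}^{\operatorname{free}}(\lambda,b^{+},y)=\partial_{x}G_{a,b,c}^{\operatorname{free}}(\lambda,b^{+},y)=0$ for $y\in\Omega_{1}$, so that (using also $a\equiv1$ on $\Omega_{1}$)
\[
g_{a,b,c}(\lambda,y)=-G_{a,b,c}^{\operatorname{free}}(\lambda,b^{-},y),\qquad f_{a,b,c}(\lambda,y)=-\partial_{x}G_{a,b,c}^{\operatorname{free}}(\lambda,b^{-},y)\qquad(y\in\Omega_{1}).
\]
Next I would substitute $x=b$, with $0<y<b$ so that $|x-y|=b-y$, into the $\Omega_{1}$-branch $-\mathrm{e}^{\mathrm{i}\mu_{\mathrm{L}}|x-y|}/(2\mathrm{i}\mu_{\mathrm{L}})+\mathrm{e}^{\mathrm{i}\mu_{\mathrm{L}}y}\cos(\mu_{\mathrm{L}}x)/(2\mathrm{i}\mu_{\mathrm{L}})$ and into its $x$-derivative, and regroup the outcome in the basis $\{\mathrm{e}^{-\mathrm{i}\mu_{\mathrm{L}}y},\mathrm{e}^{\mathrm{i}\mu_{\mathrm{L}}y}\}$. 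A short computation gives, for $y\in\Omega_{1}$,
\[
g_{a,b,c}(\lambda,y)=\frac{\mathrm{e}^{\mathrm{i}\mu_{\mathrm{L}}b}}{2\mathrm{i}\mu_{\mathrm{L}}}\mathrm{e}^{-\mathrm{i}\mu_{\mathrm{L}}y}-\frac{\cos\mu_{\mathrm{L}}b}{2\mathrm{i}\mu_{\mathrm{L}}}\mathrm{e}^{\mathrm{i}\mu_{\mathrm{L}}y},\qquad f_{a,b,c}(\lambda,y)=\frac{\mathrm{e}^{\mathrm{i}\mu_{\mathrm{L}}b}}{2}\mathrm{e}^{-\mathrm{i}\mu_{\mathrm{L}}y}+\frac{\sin\mu_{\mathrm{L}}b}{2\mathrm{i}}\mathrm{e}^{\mathrm{i}\mu_{\mathrm{L}}y}.
\]

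To finish, observe that $\mu_{\mathrm{L}}=\sqrt{\lambda}\neq0$, so $\mathrm{e}^{\mathrm{i}\mu_{\mathrm{L}}y}$ and $\mathrm{e}^{-\mathrm{i}\mu_{\mathrm{L}}y}$ are linearly independent as functions on the interval $\Omega_{1}$ (a vanishing linear combination on an interval would force $\mathrm{e}^{-2\mathrm{i}\mu_{\mathrm{L}}y}$ to be constant). Hence a relation $\alpha g_{a,b,c}(\lambda,\cdot)+\beta f_{a,b,c}(\lambda,\cdot)\equiv0$ on $\Omega_{1}$ forces the coefficient matrix
\[
\begin{pmatrix}\dfrac{\mathrm{e}^{\mathrm{i}\mu_{\mathrm{L}}b}}{2\mathrm{i}\mu_{\mathrm{L}}} & -\dfrac{\cos\mu_{\mathrm{L}}b}{2\mathrm{i}\mu_{\mathrm{L}}}\\[2ex]\dfrac{\mathrm{e}^{\mathrm{i}\mu_{\mathrm{L}}b}}{2} & \dfrac{\sin\mu_{\mathrm{L}}b}{2\mathrm{i}}\end{pmatrix}
\]
to be singular; but, using $\sin\theta+\mathrm{i}\cos\theta=\mathrm{i}\,\mathrm{e}^{-\mathrm{i}\theta}$, its determinant equals $-\mathrm{i}/(4\mu_{\mathrm{L}})\neq0$. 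Therefore $\alpha=\beta=0$, which proves the claimed linear independence. The one genuinely delicate point is bookkeeping: keeping straight which branch of $G_{a,b,c}^{\operatorname{free}}$ and which wave number ($\mu_{\mathrm{L}}$ or $\mu_{\mathrm{R}}$) is in force on either side of $b$, and on which side of the source $y$ the point $x=b$ lies. Restricting to $y\in\Omega_{1}$ is exactly what keeps this clean; for $y\in\Omega_{2}$ the right branch would no longer cancel and the exponentials $\mathrm{e}^{\pm\mathrm{i}\mu_{\mathrm{R}}y}$ would enter as well, which is not needed for the conclusion.
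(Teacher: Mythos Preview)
Your proof is correct and takes a cleaner route than the paper's. The paper first writes out $g_{a,b,c}(\lambda,\cdot)$ and $f_{a,b,c}(\lambda,\cdot)$ on all of $\Omega$ and then splits into two cases. For $a_{\mathrm{R}}=1$ it argues directly with the coefficients in the basis $\{\mathrm{e}^{\mathrm{i}\mu(b-y)},\mathrm{e}^{\mathrm{i}\mu y}\}$. For $a_{\mathrm{R}}\neq1$ it observes that $g$ is continuous across $y=b$ whereas $f$ is not (the mismatch $1-a_{\mathrm{R}}$ between the two branches produces a jump), so linear independence follows once $g$ is shown to be nonzero; this last point it verifies by restricting to $[0,b]$, arriving at exactly your formula for $g$. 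Your argument avoids the case distinction entirely: by restricting both $g$ and $f$ to $\Omega_{1}$ from the outset and exploiting the cancellation of the $\Omega_{2}$-branch there, you reduce to a single nonsingular $2\times2$ determinant uniformly in $a_{\mathrm{R}}$. The paper's second case has the virtue of a structural observation (continuous versus discontinuous), but your approach is more economical and handles both cases at once.
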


\begin{proof}
We have%
\begin{align*}
g_{a,b,c}\left(  \lambda,y\right)   &  =\frac{\operatorname*{e}%
^{\operatorname*{i}\mu_{\operatorname*{R}}\left(  b-y\right)  }%
-\operatorname*{e}^{\operatorname*{i}\mu_{\operatorname*{R}}\left\vert
b-y\right\vert }}{2\operatorname*{i}\mu_{\operatorname*{R}}}+\frac
{\operatorname*{e}^{\operatorname*{i}\mu_{\operatorname*{L}}\left\vert
b-y\right\vert }-\operatorname*{e}^{\operatorname*{i}\mu_{\operatorname*{L}}%
y}\cos\mu_{\operatorname*{L}}b}{2\operatorname*{i}\mu_{\operatorname*{L}}},\\
f_{a,b,c}\left(  \lambda,y\right)   &  =\frac{\operatorname*{sign}\left(
b-y\right)  }{2}\delta\left(  y\right)  +a_{\operatorname*{R}}\frac
{\operatorname*{e}^{\operatorname*{i}\mu_{\operatorname*{R}}\left(
b-y\right)  }}{2}+\frac{\operatorname*{e}^{\operatorname*{i}\mu
_{\operatorname*{L}}y}}{2\operatorname*{i}}\sin\mu_{\operatorname*{L}}b
\end{align*}
for $\delta\left(  y\right)  :=\operatorname*{e}\nolimits^{\operatorname*{i}%
\mu_{\operatorname*{L}}\left\vert b-y\right\vert }-a_{\operatorname*{R}%
}\operatorname*{e}\nolimits^{\operatorname*{i}\mu_{\operatorname*{R}%
}\left\vert b-y\right\vert }$.

\textbf{1st case:} $a_{\operatorname*{R}}=1$ so that $\mu=\mu
_{\operatorname*{R}}=\mu_{\operatorname*{L}}=\sqrt{\lambda}$. Then%
\begin{align*}
g_{a,b,c}\left(  \lambda,y\right)   &  =\frac{\operatorname*{e}%
\nolimits^{\operatorname*{i}\mu\left(  b-y\right)  }-\operatorname*{e}%
\nolimits^{\operatorname*{i}\mu{y}}\cos\mu{b}}{2\operatorname*{i}\mu},\\
f_{a,b,c}\left(  \lambda,y\right)   &  =\frac{\operatorname*{e}%
^{\operatorname*{i}\mu\left(  b-y\right)  }}{2}+\frac{\operatorname*{e}%
^{\operatorname*{i}\mu{y}}}{2\operatorname*{i}}\sin\mu{b}.
\end{align*}
These functions are linearly independent provided $\left(  \alpha
,\beta\right)  =\left(  0,0\right)  $ is the only solution of%
\begin{equation}
\left(  \alpha+\operatorname*{i}\beta\right)  \operatorname*{e}%
\nolimits^{\operatorname*{i}\mu\left(  b-y\right)  }+\left(  \beta\sin\mu
{b}-\alpha\cos\mu{b}\right)  \operatorname*{e}\nolimits^{\operatorname*{i}%
\mu{y}}=0. \label{alphabetacond}%
\end{equation}
Since $\mu\neq0$ the functions $\operatorname*{e}\nolimits^{\operatorname*{i}%
\mu\left(  b-y\right)  }$ and $\operatorname*{e}\nolimits^{\operatorname*{i}%
\mu{y}}$ are linearly independent so that (\ref{alphabetacond}) implies
$\alpha=-\operatorname*{i}\beta$ and $\beta\sin\mu{b}=\alpha\cos\mu{b}$. It is
a simple exercise to verify that $\left(  \alpha,\beta\right)  =\left(
0,0\right)  $ is the only solution so that we proved the lemma for the first case.

\textbf{2nd case: }$a_{\operatorname*{R}}\neq1$ so that $\mu
_{\operatorname*{R}}\neq\mu_{\operatorname*{L}}$. Observe that the function
$g_{a,b,c}\left(  \lambda,\cdot\right)  $ is continuous while $f_{a,b,c}%
\left(  \lambda,\cdot\right)  $ is discontinuous since $\delta\left(
b\right)  =1-a_{\operatorname*{R}}\neq0$. Hence, they are linearly independent
provided $g_{a,b,c}$ is not the zero function. Let $0\leq y\leq b$ so that%
\[
g_{a,b,c}\left(  \lambda,y\right)  =\frac{\operatorname*{e}^{\operatorname*{i}%
\mu_{\operatorname*{L}}b}\operatorname*{e}^{-\operatorname*{i}\mu
_{\operatorname*{L}}y}-\cos\mu_{\operatorname*{L}}b\operatorname*{e}%
^{\operatorname*{i}\mu_{\operatorname*{L}}y}}{2\operatorname*{i}%
\mu_{\operatorname*{L}}}.
\]
Since $\operatorname*{e}^{-\operatorname*{i}\mu_{\operatorname*{L}}y}$ and
$\operatorname*{e}^{\operatorname*{i}\mu_{\operatorname*{L}}y}$ are linearly
independent the function $g_{a,b,c}\left(  \lambda,\cdot\right)  $ is the zero
function if and only if $\operatorname*{e}^{\operatorname*{i}\mu
_{\operatorname*{L}}b}$ and $\cos\mu_{\operatorname*{L}}b$ are zero. However,
this is not possible and we proved the lemma also for the second case.
\end{proof}

If $\lambda$ is not an eigenvalue of (\ref{defl}) the system (\ref{defG1G2})
has a unique solution and the Green's function is well defined. If $\lambda$
approaches an eigenvalue $\lambda_{j}$ the Green's function has a singularity
which is related only to the part $G_{a,b,c}^{\hom}\left(  \lambda,\cdot
,\cdot\right)  $ since $G_{a,b,c}^{\operatorname*{free}}$ is a bounded
function with respect to $\lambda$. The order of singularity (as
$\lambda\rightarrow\lambda_{j}$) depends on the order of the zero of
$\det\mathbf{M}_{a,b,c}\left(  \lambda\right)  $ at $\lambda=\lambda_{j}$.
Expansion of $\det\mathbf{M}_{a,b,c}\left(  \lambda\right)  $ about some
$\lambda_{j}$ leads to%
\begin{equation}
\det\mathbf{M}_{a,b,c}\left(  \lambda\right)  =\sum_{\ell=0}^{\infty}%
\gamma_{\ell}\left(  \lambda-\lambda_{j}\right)  ^{\ell}\text{\quad for some
}\gamma_{\ell}=\gamma_{\ell}\left(  a,b,c,\lambda_{j}\right)  .
\label{detexpansion}%
\end{equation}

\begin{theorem}
Let $\operatorname{Re}a_{\operatorname*{R}}>0$ and let $\left(  u_{j}%
,\lambda_{j}\right)  \in V\backslash\left\{  0\right\}  \times\mathbb{C}%
\backslash\left\{  0\right\}  $ denote an eigenpair of (\ref{defl}) and let
the determinant of $\mathbf{M}_{a,b,c}\left(  \lambda\right)  $ be expanded
according to (\ref{detexpansion}). Let $\nu$ denote the largest integer such
that $\gamma_{\ell}=0$ for $0\leq\ell\leq\nu$ and we assume $\nu<\infty$,
i.e., $\det\mathbf{M}_{a,b,c}$ is not the zero function to avoid pathological
cases. Then, the ascent $\alpha_{j}$ of $\lambda_{j}$ equals $\nu$. The
dimension of the generalized eigenspace is $m_{\operatorname*{alg}}=\nu$ and
spanned by%
\[
w_{\ell}:=\left.  \frac{d^{\ell-1}u_{j}}{d\lambda^{\ell-1}}\right\vert
_{\lambda=\lambda_{j}}\quad\text{for }1\leq\ell\leq\nu.
\]
The functions $w_{\ell}$ belong to $\mathcal{N}\left(  \left(  \mathcal{A}%
-\lambda_{j}I\right)  ^{\kappa}\right)  $ for $\kappa=\ell$ but not for
$\kappa<\ell$.
\end{theorem}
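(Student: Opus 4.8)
The plan is to realize $u_j$ as the value at $\lambda=\lambda_j$ of the analytic family obtained from the representation (\ref{repu}), and to read off the Jordan structure at $\lambda_j$ by differentiating in $\lambda$. I would take
\[
u(\lambda,x):=\begin{cases} v_{a,b,c}^{\operatorname*{R}}(\lambda,b)\,v_{a,b,c}^{\operatorname*{L}}(\lambda,x), & x\in\Omega_1,\\ v_{a,b,c}^{\operatorname*{L}}(\lambda,b)\,v_{a,b,c}^{\operatorname*{R}}(\lambda,x), & x\in\Omega_2, \end{cases}
\]
that is, the ansatz (\ref{repu}) with $\left(A_1(\lambda),A_2(\lambda)\right)$ the second column of the adjugate of $\mathbf M_{a,b,c}(\lambda)$. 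By construction $u(\lambda,\cdot)$ is continuous at $b$ and vanishes at $0$, hence lies in $V$; it satisfies the Robin condition at $1$ because $v_{a,b,c}^{\operatorname*{R}}$ does (cf.\ (\ref{defvr})), and it is holomorphic in $\lambda$ near $\lambda_j\neq0$. From the adjugate identity $\mathbf M_{a,b,c}\,\mathrm{adj}\,\mathbf M_{a,b,c}=\det\mathbf M_{a,b,c}\,I$ one reads off that the flux jump across $b$ is $\left[a\partial_x u(\lambda,\cdot)\right]_b=-\det\mathbf M_{a,b,c}(\lambda)$, while the trace of $u(\lambda,\cdot)$ itself is continuous. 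Integration by parts against $v\in V$, using $L_a u(\lambda,\cdot)=\lambda u(\lambda,\cdot)$ on $\Omega\setminus\{b\}$, the boundary conditions, and this jump relation, then yields the master identity
\[
(\mathcal A-\lambda I)\,u(\lambda,\cdot)=\det\mathbf M_{a,b,c}(\lambda)\,\delta_b\qquad\text{in }V^{\times},
\]
where $\delta_b\in V^{\times}$ is point evaluation at $b$, bounded on $V\hookrightarrow C([0,1])$. Finally $u(\lambda_j,\cdot)$ has coefficient vector $\left(A_1(\lambda_j),A_2(\lambda_j)\right)\in\ker\mathbf M_{a,b,c}(\lambda_j)$ and is, apart from the exceptional situation discussed at the end, a nonzero scalar multiple of $u_j$. (Any other analytic extension of $u_j$ serves equally well: replacing $u(\lambda,\cdot)$ by $c(\lambda)u(\lambda,\cdot)$ changes the individual $w_\ell$ but not their span.)

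Differentiating the master identity $k$ times in $\lambda$ at $\lambda_j$ and using that $\lambda\mapsto\mathcal A-\lambda I$ is affine, the Leibniz rule gives
\[
(\mathcal A-\lambda_j I)\,w_{k+1}-k\,w_k=\left(\det\mathbf M_{a,b,c}\right)^{(k)}(\lambda_j)\,\delta_b,\qquad k\geq0,
\]
where $w_\ell=\frac{d^{\ell-1}}{d\lambda^{\ell-1}}u(\lambda,\cdot)\big|_{\lambda=\lambda_j}$ and $w_0:=0$ (these identities are read in $V^{\times}$, with $w_k$ embedded via $V\hookrightarrow V^{\times}$; equivalently they translate into relations for the compact operator $K_\Lambda$ of Section \ref{ChapContProblem} at $\mu_j=1/(\lambda_j+\Lambda)$, through which the powers $(\mathcal A-\lambda_j I)^\kappa$ are to be understood). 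Since $\det\mathbf M_{a,b,c}$ has a zero of order $\nu$ at $\lambda_j$, the right-hand side vanishes for $0\leq k\leq\nu-1$ and is a nonzero multiple of $\delta_b$ for $k=\nu$. The first $\nu$ relations say exactly that $(w_1,\dots,w_\nu)$ is a Jordan chain above the eigenfunction $w_1$, and a one-line induction yields $(\mathcal A-\lambda_j I)^{\ell-1}w_\ell=(\ell-1)!\,w_1\neq0$ together with $(\mathcal A-\lambda_j I)^{\ell}w_\ell=0$; this is precisely the final assertion of the theorem, that $w_\ell\in\mathcal N\!\left((\mathcal A-\lambda_j I)^\kappa\right)$ for $\kappa=\ell$ but not for $\kappa<\ell$. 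A Jordan chain is linearly independent, so $\operatorname{span}\{w_1,\dots,w_\nu\}$ is $\nu$-dimensional and lies in the generalized eigenspace; hence $m_{\operatorname*{alg}}\geq\nu$ and the ascent $\alpha_j\geq\nu$.

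For the matching upper bounds, note first that the geometric multiplicity of $\lambda_j$ is $1$ (Remark \ref{RemZeroMat}), so the restriction of $K_\Lambda$ to the generalized eigenspace is a single Jordan block and $m_{\operatorname*{alg}}=\alpha_j$; it therefore suffices to prove $\alpha_j\leq\nu$. Here I would use the standard fact that the order of the pole of the resolvent of a compact operator at an isolated point of its spectrum equals the ascent of the corresponding eigenvalue (the Laurent principal part is the Riesz projection composed with powers of the nilpotent part; see \cite{Kato}, \cite{DunfordSchwartzII}). The resolvent kernel here is $G_{a,b,c}=G_{a,b,c}^{\operatorname*{free}}+G_{a,b,c}^{\hom}$, in which $G_{a,b,c}^{\operatorname*{free}}$ is holomorphic in $\lambda$ near $\lambda_j\neq0$ while, by Cramer's rule applied to (\ref{defG1G2}), $G_{a,b,c}^{\hom}$ has at $\lambda_j$ at most a pole of order $\nu$ (its numerators are holomorphic, its denominator is $\det\mathbf M_{a,b,c}$); moreover the reciprocal change of spectral parameter $1/\mu=\lambda+\Lambda$ relating $\mathcal A$ and $K_\Lambda$ is a local biholomorphism near $\lambda_j$ and preserves pole orders. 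Hence $\alpha_j\leq\nu$, so $\alpha_j=\nu=m_{\operatorname*{alg}}$ and $\{w_1,\dots,w_\nu\}$ is a basis of the generalized eigenspace.

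The point demanding the most care is the first one, namely ensuring $u(\lambda_j,\cdot)\neq0$. The family above collapses to the zero function exactly when $v_{a,b,c}^{\operatorname*{L}}(\lambda_j,b)=\sin(\sqrt{\lambda_j}\,b)=0$ --- which, since $\det\mathbf M_{a,b,c}(\lambda_j)=0$, then forces $v_{a,b,c}^{\operatorname*{R}}(\lambda_j,b)=0$ as well. In that exceptional case I would build the family from the other column of $\mathrm{adj}\,\mathbf M_{a,b,c}$ (nonzero there, since $\mathbf M_{a,b,c}(\lambda_j)\neq0$ by Remark \ref{RemZeroMat}), after a fixed $\det\mathbf M_{a,b,c}(\lambda)$-multiple correction to restore membership in $V$ --- equivalently, one may throughout use $u(\lambda,\cdot):=\det\mathbf M_{a,b,c}(\lambda)\,G_{a,b,c}(\lambda,\cdot,y_0)$ for a source point $y_0\in\Omega\setminus\{b\}$ at which the Cramer numerators of $G_{a,b,c}^{\hom}$ do not all vanish at $\lambda_j$; such a $y_0$ exists because $g_{a,b,c}(\lambda_j,\cdot)$ and $f_{a,b,c}(\lambda_j,\cdot)$ are linearly independent (Lemma \ref{LemLinIndep}) while $\mathbf M_{a,b,c}(\lambda_j)\neq0$. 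Everything else --- the integration by parts, the Leibniz bookkeeping, the Jordan-chain induction, and the appeal to the resolvent-pole/ascent identity --- is routine.
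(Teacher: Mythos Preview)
Your proof is correct and complete; it follows the same basic plan as the paper---read the ascent off the order of the zero of $\det\mathbf M_{a,b,c}$ and build the Jordan chain by differentiation in $\lambda$---but you organize the two inequalities differently, and this reorganization has some payoff.

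The paper first establishes that the pole of the Green's function at $\lambda_j$ has order \emph{exactly} $\nu$: this requires showing that the Cramer numerators in (\ref{Grep}) are not identically zero at $\lambda_j$, which is where Lemmas~\ref{Lemlambdaj} and~\ref{LemLinIndep} and Remark~\ref{RemZeroMat} enter. It then cites \cite[Thm.~3.1]{M} for the identity ``ascent $=$ pole order'', and finally produces the generalized eigenfunctions by differentiating the relation $(\mathcal A-\lambda_j I)^{\ell}w_{\ell-1}=0$ with respect to $\lambda_j$---a step that, as written in the paper, is somewhat informal since $\lambda_j$ is a fixed number.

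You reverse the logic. Your master identity $(\mathcal A-\lambda I)\,u(\lambda,\cdot)=\det\mathbf M_{a,b,c}(\lambda)\,\delta_b$ makes the differentiation argument fully rigorous and delivers an explicit Jordan chain of length $\nu$, hence $\alpha_j\ge\nu$ directly. For the converse you need only the \emph{trivial} bound ``pole order $\le\nu$'' from Cramer's rule together with the standard ``ascent $=$ pole order'' fact from \cite{Kato}; the numerator non-vanishing argument is never needed in the generic case. This is a genuine economy. Conversely, the paper's route yields the exact pole order as a byproduct, which has some independent interest, and its argument is uniform (no case distinction), whereas you must treat the exceptional configuration $\sin(\sqrt{\lambda_j}\,b)=0$ separately---and there your fallback to $\det\mathbf M_{a,b,c}(\lambda)\,G_{a,b,c}(\lambda,\cdot,y_0)$ ultimately relies on the same ingredients (Lemma~\ref{LemLinIndep}, Remark~\ref{RemZeroMat}) that the paper uses throughout.

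One small point worth making explicit in a final write-up: the iterated operator $(\mathcal A-\lambda_j I)^\kappa$ is not literally defined on $V$, so the Jordan-chain relations should be read, as you indicate parenthetically, through the compact operator $K_\Lambda$ at $\mu_j=1/(\lambda_j+\Lambda)$; spelling out that translation once would remove any ambiguity.
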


\begin{proof}
First, we determine the order of singularity at $\lambda=\lambda_{j}$ of the
Green's function. It suffices to study the part $G_{a,b,c}^{\hom}\left(
\lambda,\cdot,\cdot\right)  $ since $G_{a,b,c}^{\operatorname*{free}}\left(
\lambda,\cdot,\cdot\right)  $ does not introduce poles. The singularity at
$\lambda_{j}$ is induced via the coefficients $G_{1}$, $G_{2}$ as the solution
of (\ref{defG1G2}). For $\lambda$ being not an eigenvalue of (\ref{defl}) we
have\footnote{For a $2\times2$ matrix $\mathbf{A}=\left[
\begin{array}
[c]{ll}%
a & b\\
c & d
\end{array}
\right]  $ we set $\mathbf{A}^{\times}\mathbf{:}=\left[
\begin{array}
[c]{ll}%
d & -b\\
-c & a
\end{array}
\right]  $. For $\mathbf{u}=\left(  u_{i}\right)  _{i=1}^{2}$, $\mathbf{v}%
=\left(  v_{i}\right)  _{i=1}^{2}\in\mathbb{C}^{2}$, we denote by
$\left\langle \cdot,\cdot\right\rangle $ the \textit{bilinear} form
$\left\langle \mathbf{u},\mathbf{v}\right\rangle =u_{1}v_{1}+u_{2}v_{2}$.}%
\begin{equation}
G_{a,b,c}^{\hom}\left(  \lambda,x,y\right)  :=\left\{
\begin{array}
[c]{ll}%
\frac{\left\langle \mathbf{M}_{a,b,c}^{\times}\left(  \lambda\right)  \left(
\begin{array}
[c]{c}%
g_{a,b,c}\left(  \lambda,y\right)  \\
f_{a,b,c}\left(  \lambda,y\right)
\end{array}
\right)  ,\left(
\begin{array}
[c]{c}%
v_{a,b,c}^{\operatorname*{L}}\left(  \lambda,x\right)  \\
0
\end{array}
\right)  \right\rangle }{\det\mathbf{M}_{a,b,c}\left(  \lambda\right)  } &
\text{in }\Omega_{1}\times\Omega,\\
& \\
\frac{\left\langle \mathbf{M}_{a,b,c}^{\times}\left(  \lambda\right)  \left(
\begin{array}
[c]{c}%
g_{a,b,c}\left(  \lambda,y\right)  \\
f_{a,b,c}\left(  \lambda,y\right)
\end{array}
\right)  ,\left(
\begin{array}
[c]{c}%
0\\
v_{a,b,c}^{\operatorname*{R}}\left(  \lambda,x\right)
\end{array}
\right)  \right\rangle }{\det\mathbf{M}_{a,b,c}\left(  \lambda\right)  } &
\text{in }\Omega_{2}\times\Omega.
\end{array}
\right.  \label{Grep}%
\end{equation}
Poles are introduced to the Green's function via the zeroes of $\det
\mathbf{M}_{a,b,c}\left(  \lambda\right)  $ (cf. (\ref{detexpansion})); to
determine their orders we also have to investigate whether the numerators in
(\ref{Grep}) can be the zero function for certain values of $\lambda_{j}$. In
the following we will prove (by contradiction) that the two brackets
$\left\langle \cdot,\cdot\right\rangle $ in (\ref{Grep}) are \textit{not} the
zero function in a neighborhood of an eigenvalue. Since $v_{a,b,c}%
^{\operatorname*{L}}\left(  \lambda,\cdot\right)  $ and $v_{a,b,c}%
^{\operatorname*{R}}\left(  \lambda,\cdot\right)  $ are not the zero function
in a neighborhood of an eigenvalue $\lambda_{j}$ (cf. Lemma \ref{Lemlambdaj})
we conclude that $\mathbf{M}_{a,b,c}^{\times}\left(  \lambda\right)  \left(
\begin{array}
[c]{c}%
g_{a,b,c}\left(  \lambda,y\right)  \\
f_{a,b,c}\left(  \lambda,y\right)
\end{array}
\right)  $ must be the zero function. Recall that $\lambda\neq0$. From Lemma
\ref{LemLinIndep} we know that $g_{a,b,c}\left(  \lambda,\cdot\right)  $ and
$f_{a,b,c}\left(  \lambda,\cdot\right)  $ are linearly independent so that
there exists two values $y_{1},y_{2}\in\Omega$ such that the vectors $\left(
g_{a,b,c}\left(  \lambda,y_{i}\right)  ,f_{a,b,c}\left(  \lambda,y_{i}\right)
\right)  ^{\intercal}$, $i=1,2$, are linearly independent. Hence
$\mathbf{M}_{a,b,c}\left(  \lambda\right)  $ must be the zero matrix in order
that the two brackets $\left\langle \cdot,\cdot\right\rangle $ in (\ref{Grep})
could be the zero function. However, for $\lambda\neq0$ the matrix entries
$v_{a,b,c}^{\operatorname*{L}}\left(  \lambda,b\right)  =\sin\mu
_{\operatorname*{L}}b$ and $\partial_{x}v_{a,b,c}^{\operatorname*{L}}\left(
\lambda,b\right)  =\mu_{\operatorname*{L}}\cos\mu_{\operatorname*{L}}b$ cannot
be zero simultaneously and, hence, the matrix $\mathbf{M}_{a,b,c}\left(
\lambda\right)  $ cannot be the zero matrix (cf. Rem. \ref{RemZeroMat}). From
(\ref{detexpansion}) and $\gamma_{\ell}=0$ for $0\leq\ell\leq\nu$ we now can
conclude that the order of the pole of the Green's function at $\lambda_{j}$
equals $\nu$.

From \cite[Thm. 3.1]{M} we know that the ascent of $\lambda_{j}$ equals the
order of the pole $\nu$.

Since the geometric multiplicity of eigenvalues for problem (\ref{defl})
equals $1$ (cf. Rem. \ref{RemZeroMat}) we get by induction that the space of
generalized eigenfunctions are spanned by the solutions of the following
sequence of problems: Set $H_{0}:=\left\{  0\right\}  $. For $1\leq\ell\leq
\nu$, let $w_{\ell}\in V\backslash H_{\ell-1}$ be a solution of%
\[
\left(  \mathcal{A}-\lambda_{j}I\right)  ^{\ell}w_{\ell}=0\quad\text{and
set\quad}H_{\ell}:=H_{\ell-1}+\operatorname*{span}\left\{  w_{\ell}\right\}
.
\]
Clearly, we have $w_{1}=u_{j}$ and $H_{1}$ is the (one-dimensional) eigenspace
of $\lambda_{j}$. For $\ell=2,\ldots,\nu$ we obtain by induction%
\[
\left(  \mathcal{A}-\lambda_{j}I\right)  ^{\ell}w_{\ell-1}=\left(
\mathcal{A}-\lambda_{j}I\right)  \left(  \left(  \mathcal{A}-\lambda
_{j}I\right)  ^{\ell-1}w_{\ell-1}\right)  =0.
\]
We differentiate this equation with respect to $\lambda_{j}$ and obtain%
\[
-\ell\left(  \mathcal{A}-\lambda_{j}I\right)  ^{\ell-1}w_{\ell-1}+\left(
\mathcal{A}-\lambda_{j}I\right)  ^{\ell}\partial_{\lambda_{j}}w_{\ell-1}=0.
\]
The first summand vanishes by induction so that $w_{\ell}=\partial
_{\lambda_{j}}w_{\ell-1}$ is in $H_{\ell}$ if we prove that it is not the zero
function. Since $u_{j}$ is an eigenfunction of (\ref{defl}) it is the non-zero
function: on $\Omega_{1}$ it is a multiple of $\sin\sqrt{\lambda_{j}}x$ and on
$\Omega_{2}$ a linear combination of $\cos\sqrt{\lambda_{j}%
/a_{\operatorname*{R}}}x$ and $\sin\sqrt{\lambda_{j}/a_{\operatorname*{R}}}x$
so that no derivative with respect to $\lambda$ is the zero function. Hence
$w_{\ell}\in H_{\ell}$.
\end{proof}

\begin{remark}
The functions $\gamma_{\ell}\left(  a,b,c,\lambda\right)  $ in
(\ref{detexpansion}) are transcendental complex\--val\-ued functions and it is a
non-trivial task to determine coefficients $a,b,c,\lambda$ such that
$\gamma_{\ell}$ is zero for $\ell=0,\ldots,\nu$ for some $\nu>0$. In
\cite{GasserMaster} a procedure is described how such parameter configurations
can be computed to high precision. In the setting of our paper, we were able
to choose these parameters,
using the nonlinear solver of Mathematica applied to
the symbolic expression of $\gamma_{\ell}\left(  a,b,c,\lambda\right)  $
and carefully chosen starting values, such that $\gamma_{\ell}\left(  a,b,c\right)  =0$
for $\ell=0,1,2,3$. We conjecture that it is not possible to find
configurations for problem (\ref{defl}) such that $\gamma_{\ell}$ vanishes at
a higher order.
\end{remark}

\section{Numerical Experiments\label{SecNumExp}}

In this section we present several numerical experiments that indicate that
the Babu{\v{s}}ka-Osborn theory is sharp for defective eigenvalues. The
numerical experiments below verify that the eigenvalue errors of a defective
eigenvalue have reduced convergence rates while the mean eigenvalue error
converges with the full rate. We construct two main examples based on the
construction in Section \ref{SecGreensFct}, one has full regularity and one
has reduced regularity.

In the following figures, we display the errors
in terms of the number of degrees of freedom $N$, where $h\approx N^{-1/d}$
for uniform meshes.

\subsection{Regular Example}

\begin{figure}[ptb]
\includegraphics[width=0.49\textwidth]{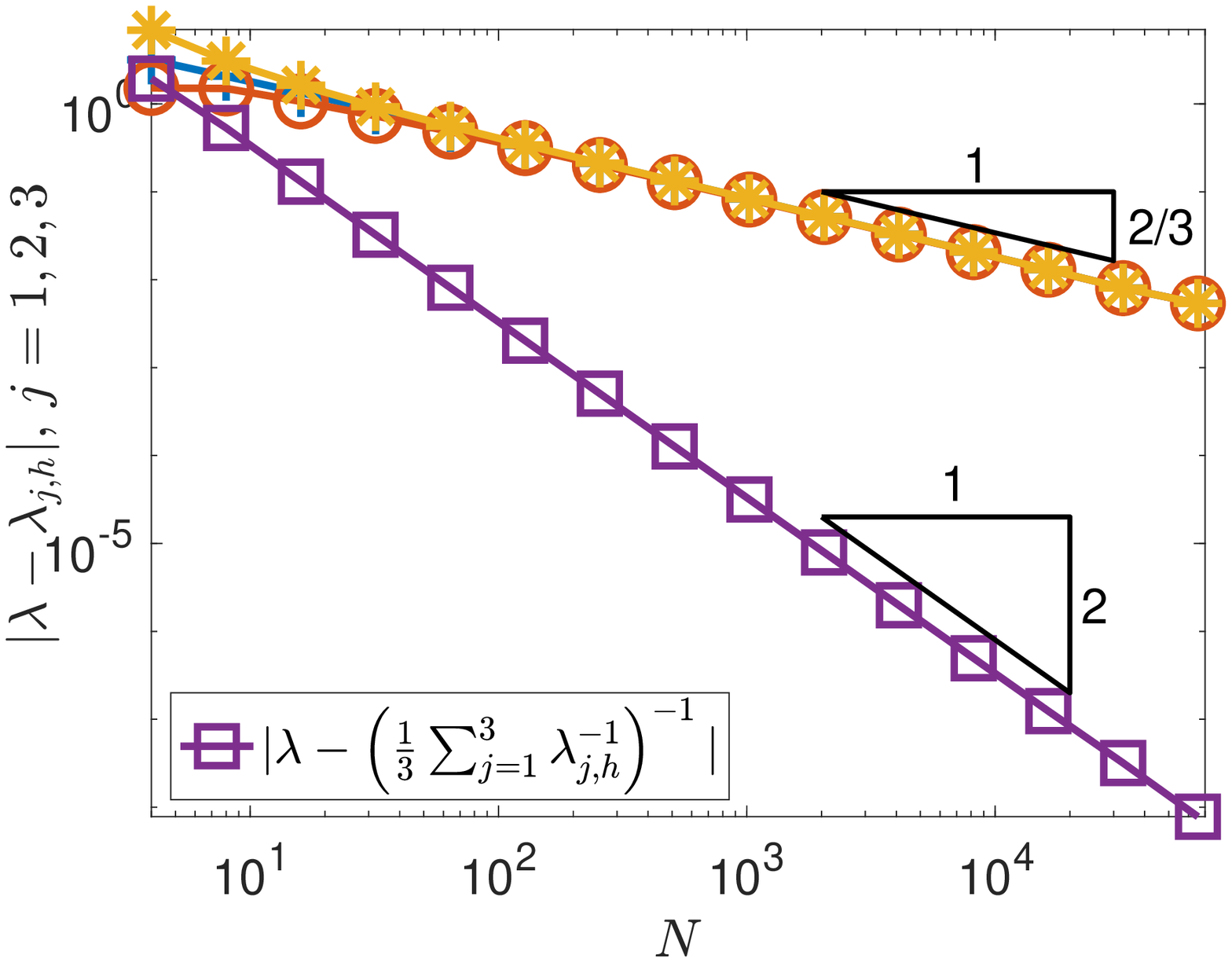}
\includegraphics[width=0.49\textwidth]{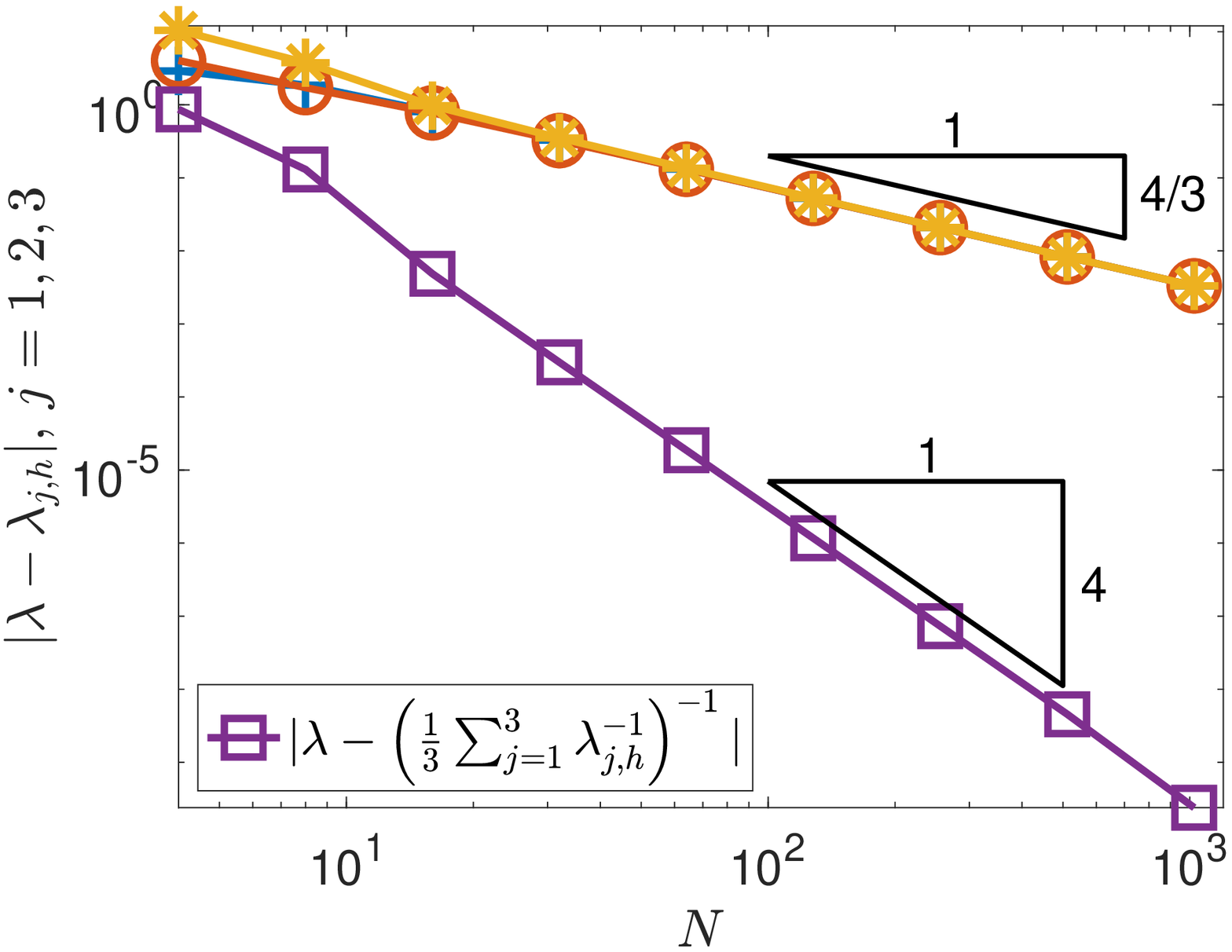}\caption{Regular
example with $\mathbb{P}_{1}$, and $\mathbb{P}_{2}$ FEM in 1d.}%
\label{fig:SmoothConvergence1d}%
\end{figure}
\begin{figure}[ptb]
\centering\includegraphics[width=0.49\textwidth]{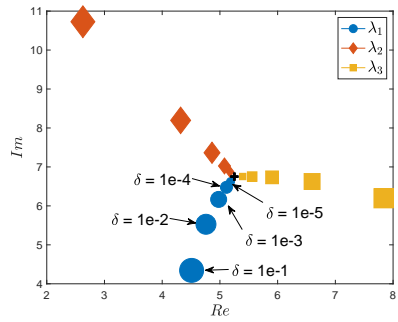}\caption{Sensitivity
of eigenvalues due to $\delta$ perturbations of the real part of $c$.}%
\label{fig:SmoothSensitivity1d}%
\end{figure}
\begin{figure}[ptb]
\centering\includegraphics[width=0.49\textwidth]{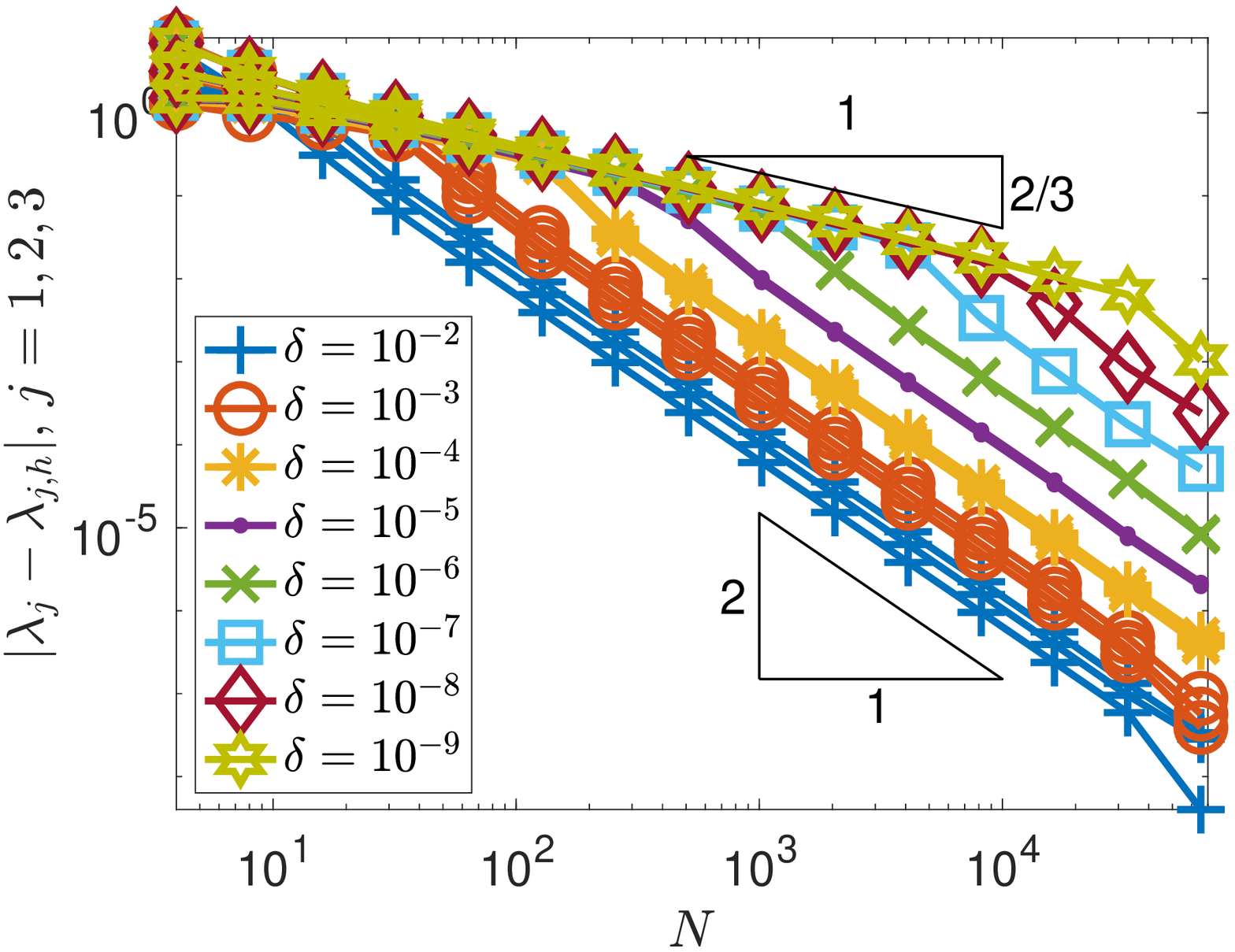}
\includegraphics[width=0.49\textwidth]{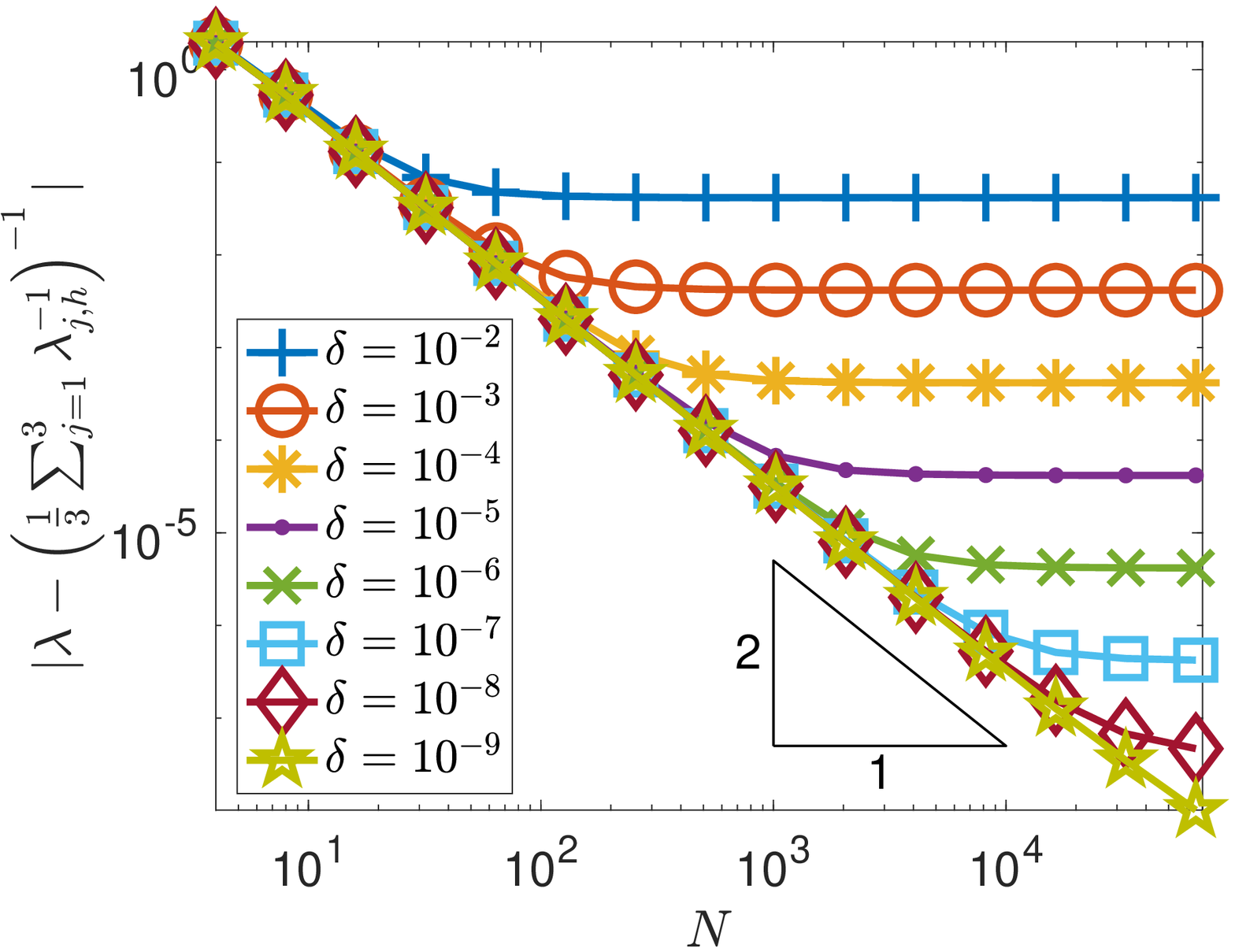}\caption{Convergence
history for $\mathbb{P}_{1}$ FEM in 1d for $\delta$ perturbed problems with
clustered eigenvalues.}%
\label{fig:SmoothSensitivity1d_2}%
\end{figure}

In this example we align the jump of the diffusion coefficient with the mesh,
so that the diffusion coefficient is piecewise constant on refined meshes.
Although the continuous eigenfunctions are not globally smooth due to the
jumping coefficient, they are piecewise smooth and if the mesh contains the
jump point as a mesh point we can expect that the convergence orders are
\textit{not} reduced due to lower global regularity. In this light, we call
this set of examples \textit{regular examples.}

\subsection{One-dimensional Example}

Let $\Omega=(0,1)$ with $a=1$ on $[0,1/2]$ and
$a=a_{\operatorname*{R}}$ on $(1/2,1]$, using the construction of the previous
section, we compute
\begin{align*}
a_{\operatorname*{R}}  &  =0.1069220800406739+0.08937533852238478i,\\
c  &  =-0.9634059612381408+0.5989684988897067i,
\end{align*}
for the first (smallest in magnitude) complex eigenvalue
\[
\lambda=5.250721274740938+6.750931815875402i,
\]
which has algebraic multiplicity $m_{alg}=3$ and ascent $\alpha=3$ by construction.

In Figure~\ref{fig:SmoothConvergence1d} we observe convergence rates according
to the theory, in case of the $\mathbb{P}_{1}$ finite element method the
convergence is of order $\mathcal{O}(N^{-2/3})$ (due to $\alpha=3$) for the
eigenvalue errors $|\lambda-\lambda_{j,h}|$, $j=1,2,3$, and optimal
convergence $\mathcal{O}(N^{-2})$ for the mean eigenvalue error. For the
second order $\mathbb{P}_{2}$ finite element method we observe twice the
convergence rate, i.e. $\mathcal{O}(N^{-4/3})$ for the eigenvalue errors, and
$\mathcal{O}(N^{-4})$ for the mean eigenvalue error which show that the
theoretical predicted rates are sharp for these examples.

Next, we investigate the sensitivity of the defective eigenvalue $\lambda$.
Since the defect is very sensitive towards the choice of the parameters
$a_{R}$ and $c$, we perturb only the real part of $c$ by adding a small (real)
value $\delta$. In Figure~\ref{fig:SmoothSensitivity1d}, we observe that even
very small perturbations $\delta$, immediately lead to a splitting of the
defective eigenvalue into three clustered eigenvalues. Even a relatively small
perturbation $\delta=10^{-2}$, of about 1\%, already leads to a significant
separation of the eigenvalues of size greater than $2$.

We investigate the transition of the defective eigenvalue into a separated
cluster of eigenvalues in more detail and make the following observations in
Figure~\ref{fig:SmoothSensitivity1d_2}. In the left figure we display the
eigenvalue errors $|\lambda_{j}-\lambda_{j,h}|$, $j=1,2,3$, for $\mathbb{P}%
_{1}$ finite elements and different perturbations $\delta$ towards precomputed
reference values $\lambda_{j}$ for the clustered eigenvalues. We computed the
reference values with higher order $\mathbb{P}_{3}$ finite elements on fine
meshes with high accuracy. In the right figure, we show the convergence of the
mean eigenvalue error towards the defective eigenvalue $\lambda$. We observe
that for $\delta=10^{-2}$ the eigenvalues are well separated, hence the three
distinct eigenvalues converge with optimal rates and the mean eigenvalue error
does not converge towards the defective eigenvalue $\lambda$. Interestingly,
for smaller values of $\delta$, we observe that there seems to be a resolution
barrier. Before a certain resolution is reached, we observe that the
eigenvalue errors show the reduced convergence rate of approximating a
defective eigenvalue, and even the mean value converges towards the defective
eigenvalue $\lambda$. Once the mesh is fine enough, so that the clustered
eigenvalues can be separated also on the discrete level, the eigenvalue errors
converge with optimal rates and their mean value stops converging towards the
defective eigenvalue.

In \cite{GasserMaster}, other explicit choices of parameters are given such
that the eigenvalue of the elliptic boundary value problem is defective.

\subsection{Higher Dimensions}

\begin{figure}[ptb]
\includegraphics[width=0.49\textwidth]{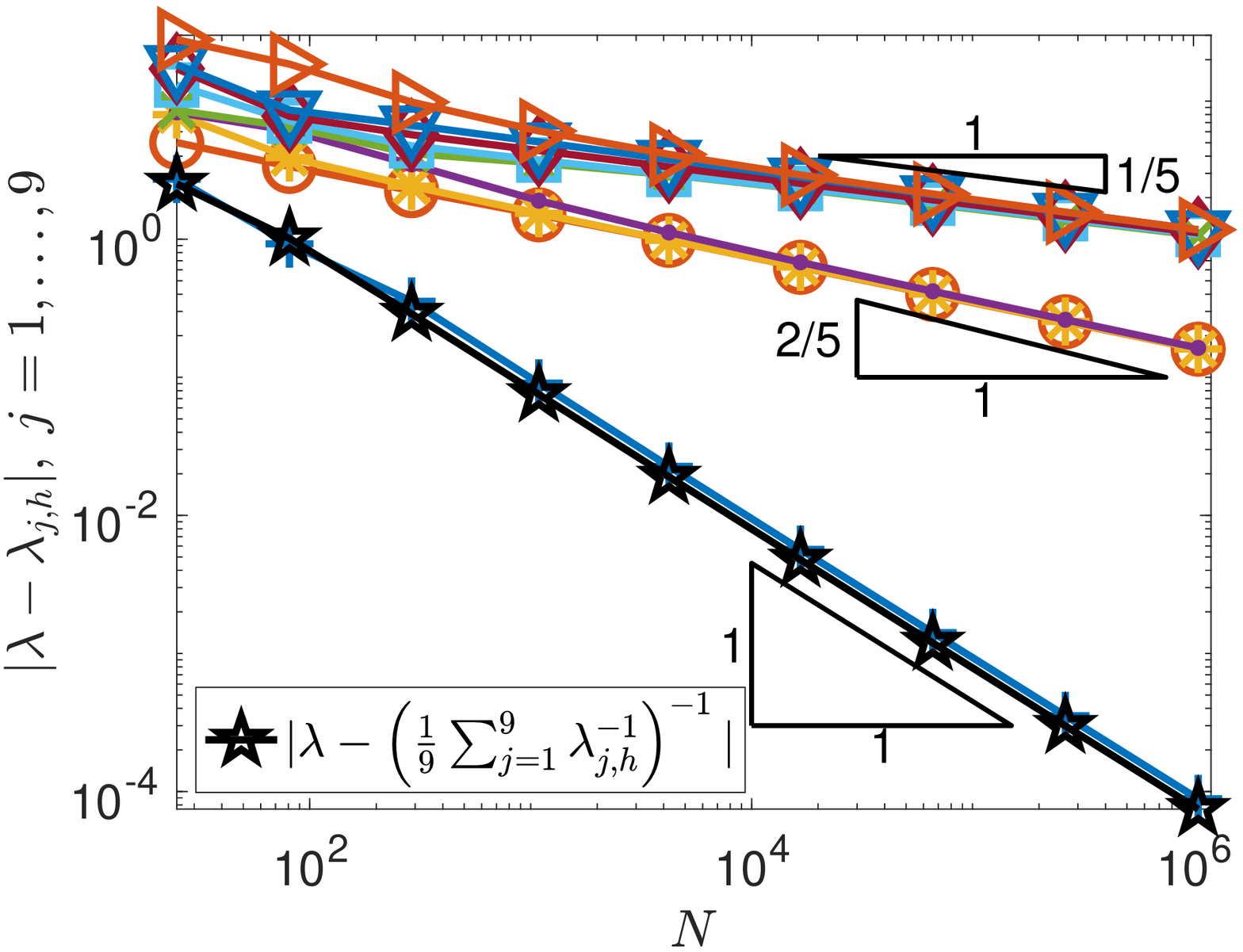}
\includegraphics[width=0.49\textwidth]{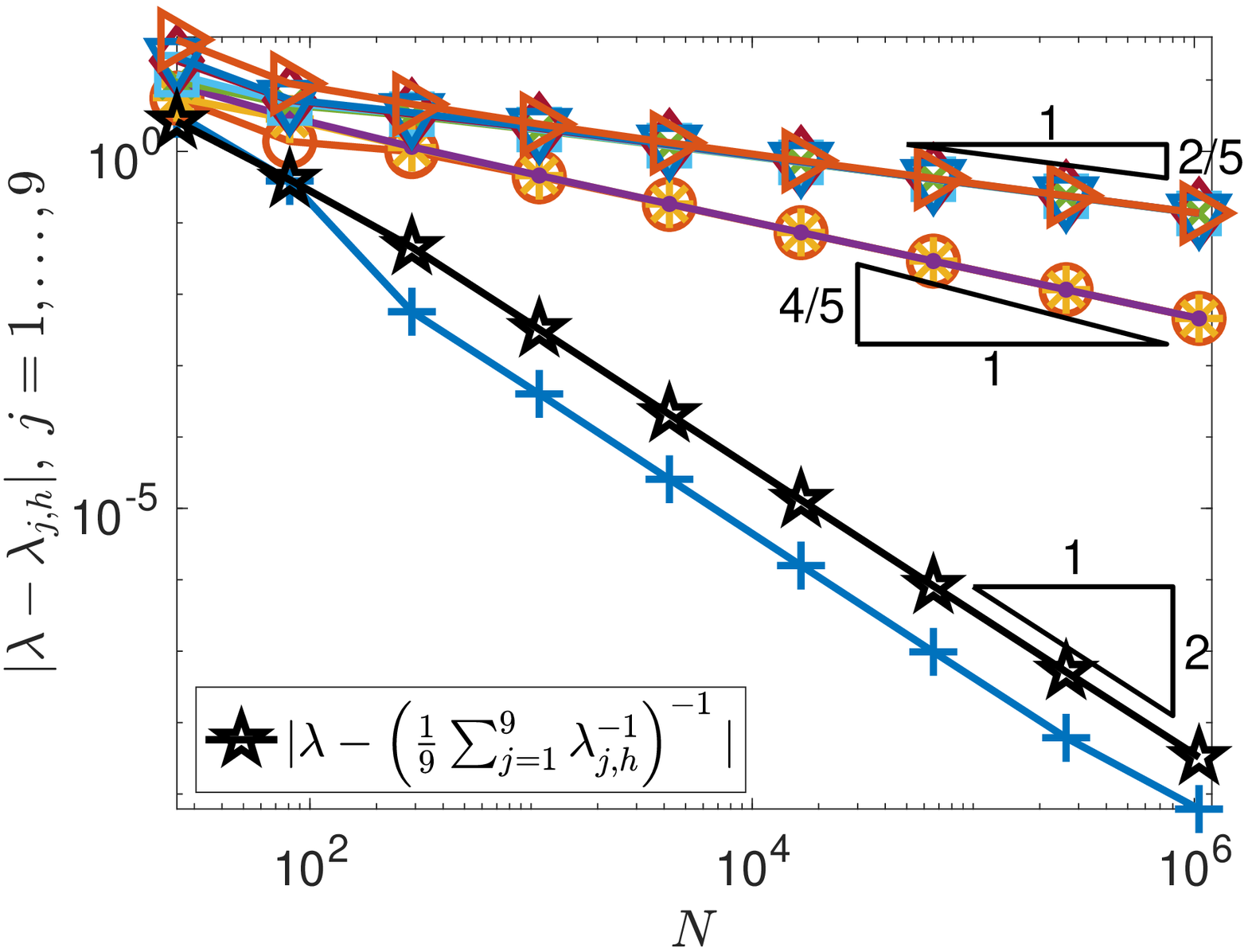}\caption{Regular
example with $\mathbb{P}_{1}$, and $\mathbb{P}_{2}$ FEM in 2d.}%
\label{fig:smooth2d}
\end{figure}
\begin{figure}[ptb]
\includegraphics[width=0.49\textwidth]{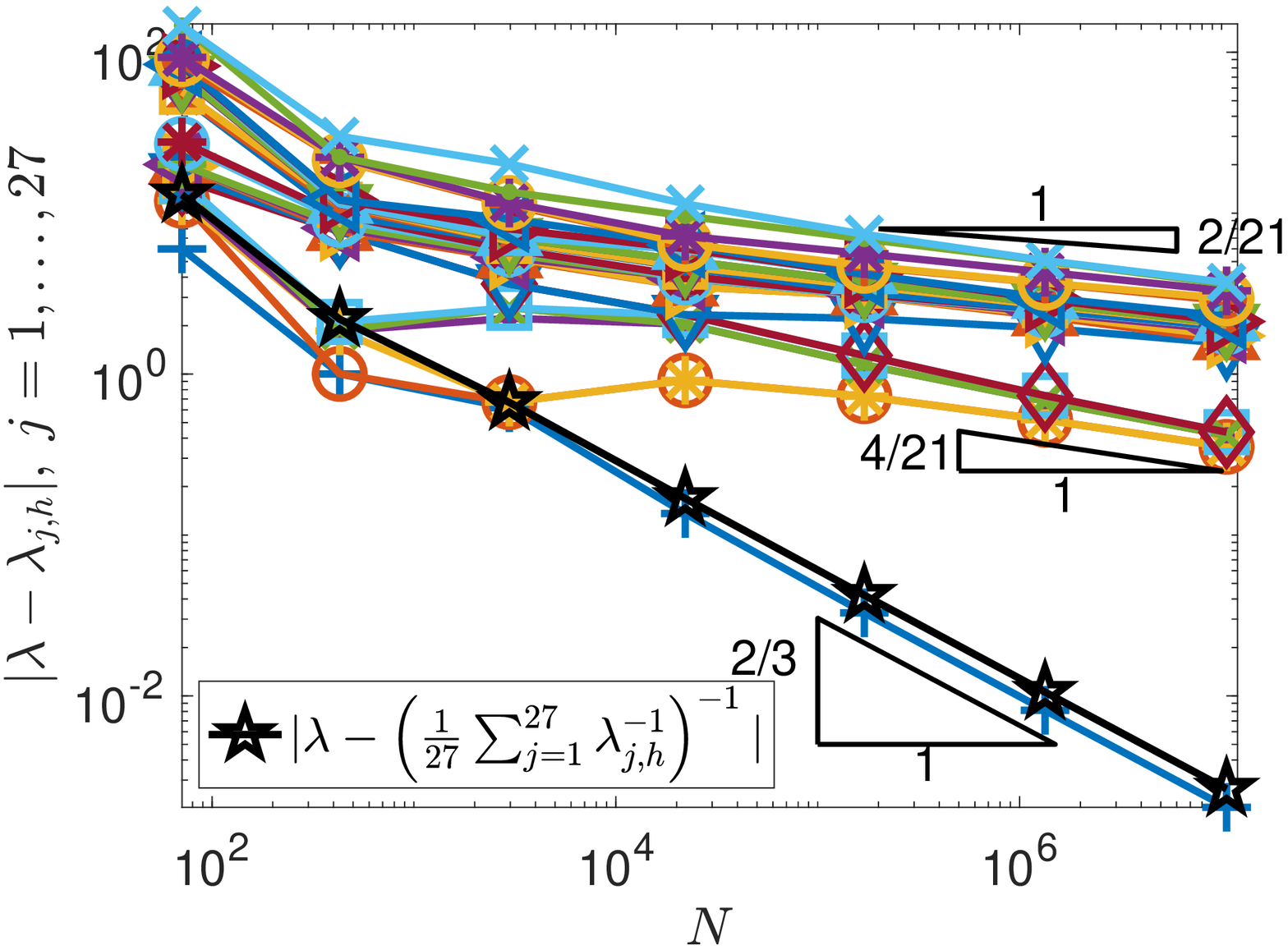}
\includegraphics[width=0.49\textwidth]{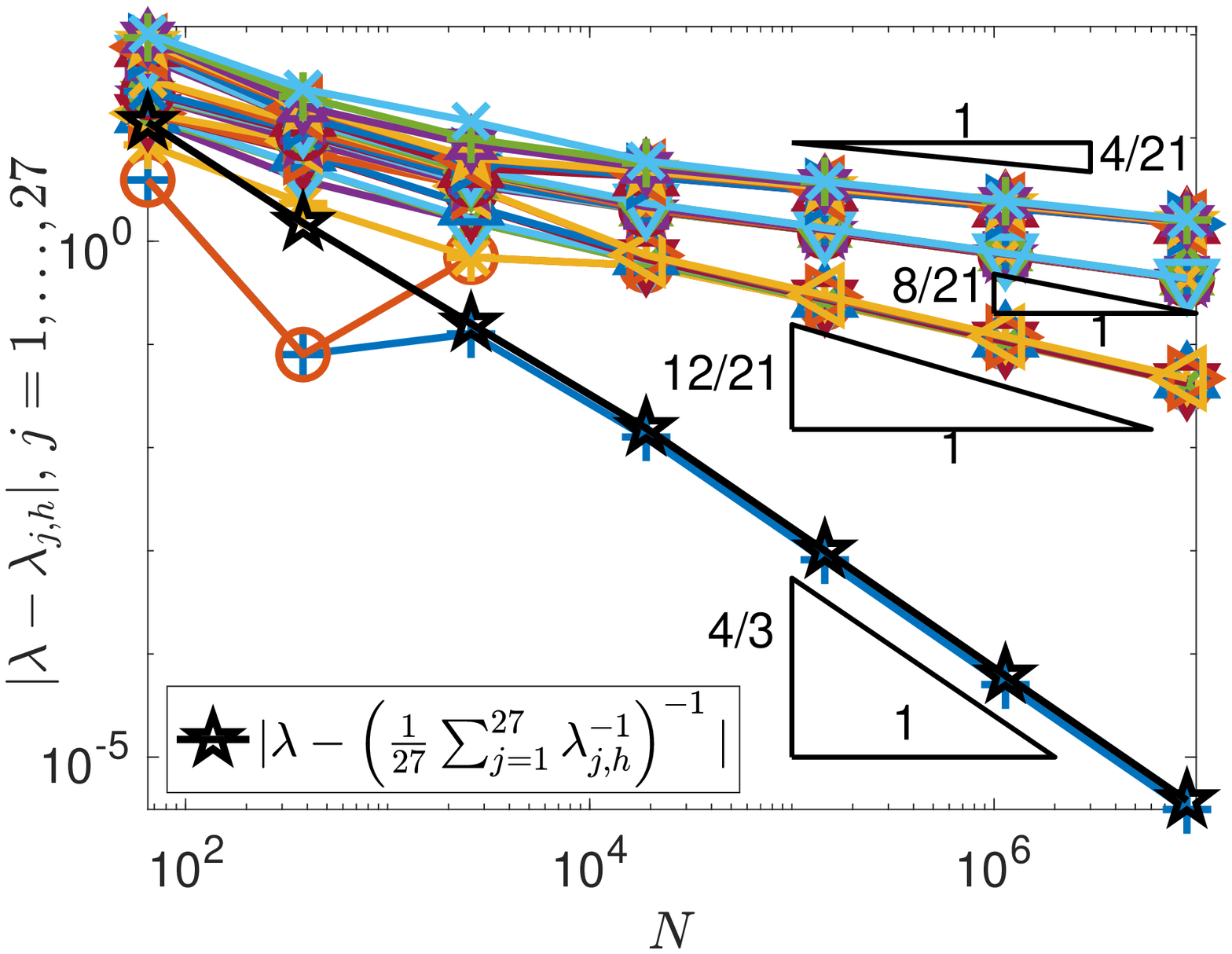}\caption{Regular
example with $\mathbb{P}_{1}$, and $\mathbb{P}_{2}$ FEM in 3d.}%
\label{fig:smooth3d}
\end{figure}

We extend the one-dimensional example to higher dimensions
$d=2,3$ by taking the tensor product of the (generalized) eigenfunctions in
$x$, $y$ and $z$ coordinates, which leads to the eigenvalue $2\lambda$ in two
dimensions of algebraic multiplicity $m_{alg}=9$, and the eigenvalue
$3\lambda$ for $d=3$ with algebraic multiplicity $m_{alg}=27$. The diffusion
coefficient and the boundary conditions are extended by tensorization to
higher dimensions as well.

In two dimensions, we observe in Figure~\ref{fig:smooth2d} for $\mathbb{P}_{1}$ finite elements convergence
of at least $\mathcal{O}(N^{-1/5})$ for the eigenvalue errors and
$\mathcal{O}(N^{-1})$ for the mean eigenvalue error. For $\mathbb{P}_{2}$
finite elements we observe twice the convergence, namely at least
$\mathcal{O}(N^{-2/5})$ for the eigenvalue errors and $\mathcal{O}(N^{-2})$
for the mean eigenvalue error. This shows numerically the ascent $\alpha=5$.
In addition, we observe that some discrete eigenvalues converge with rates in
between those two extreme cases. Some eigenvalues converge with order close to
$\mathcal{O}(N^{-2/5})$ for $\mathbb{P}_{1}$ finite elements and close to
$\mathcal{O}(N^{-4/5})$ for $\mathbb{P}_{2}$ finite elements. Note that one
eigenvalue seems to correspond to a discrete eigenvector, hence converges with
the optimal rate.

For $d=3$, we observe in Figure~\ref{fig:smooth3d} convergence rates of the eigenvalue errors as low as
$\mathcal{O}(N^{-2/21})$ for $\mathbb{P}_{1}$ finite elements and
$\mathcal{O}(N^{-4/21})$ for $\mathbb{P}_{2}$ finite elements, which indicate
the ascent $\alpha=7$. Again the mean eigenvalue errors converge optimally.
Note that some discrete eigenvalues converge with rates in between the optimal
and reduced ones, and that one eigenvalue converges with optimal rate. In
particular for $\mathbb{P}_{2}$ finite elements we observe that some
eigenvalues converge with order $\mathcal{O}(N^{-8/21})$, and some even with
order $\mathcal{O}(N^{-12/21})$, which relates to convergence order of
$\mathcal{O}(h^{2k/7})$, for $k=2,3$. This confirms the impressive sharpness
of the theory and that in principle any convergence order $\mathcal{O}%
(h^{pk/\alpha})$, for $k=1,\ldots,\alpha$, can occur, not only in theory, but
as we have demonstrated also in practical computations.

\subsection{Examples with Reduced Regularity}

\begin{figure}[ptb]
\includegraphics[width=0.49\textwidth]{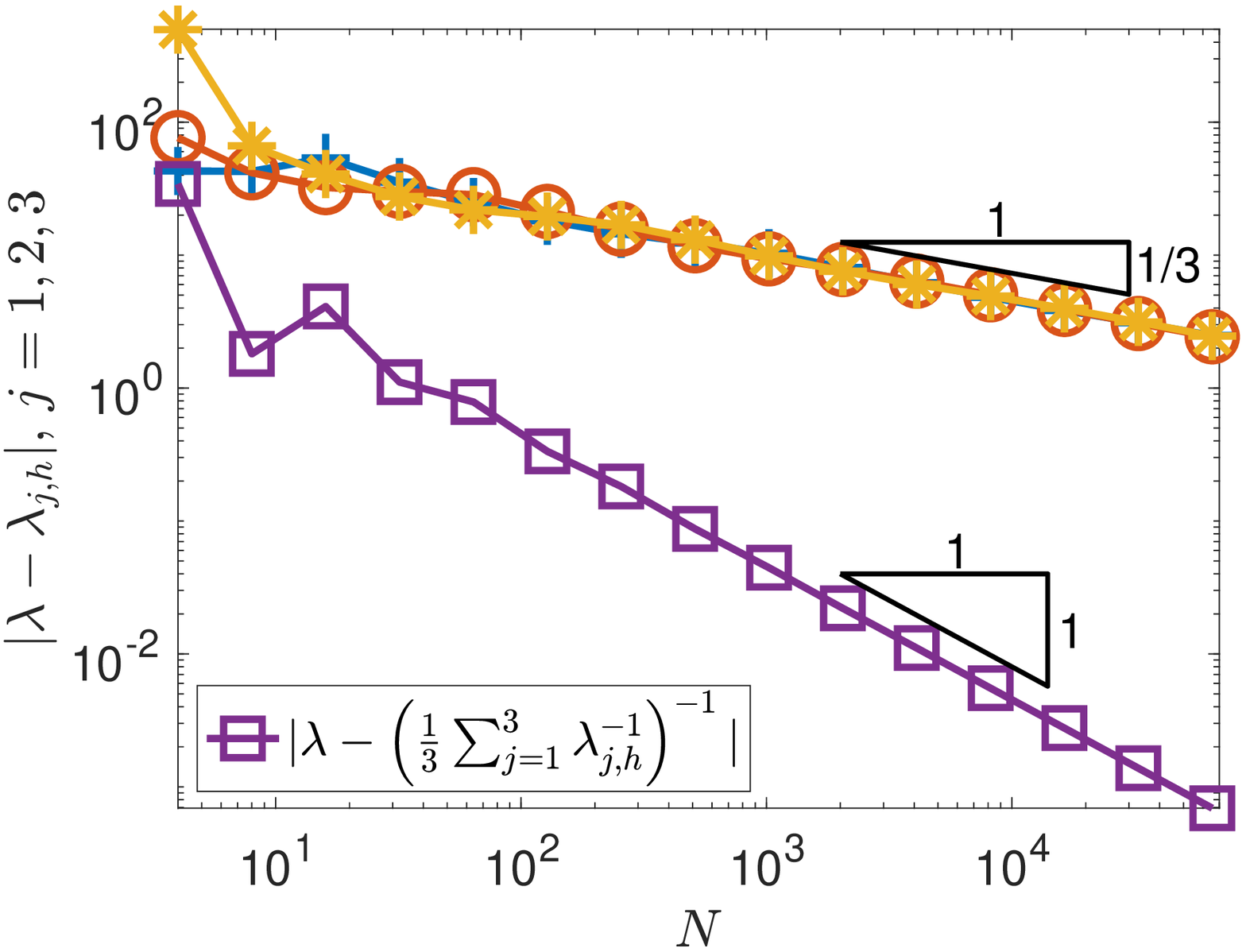}
\includegraphics[width=0.49\textwidth]{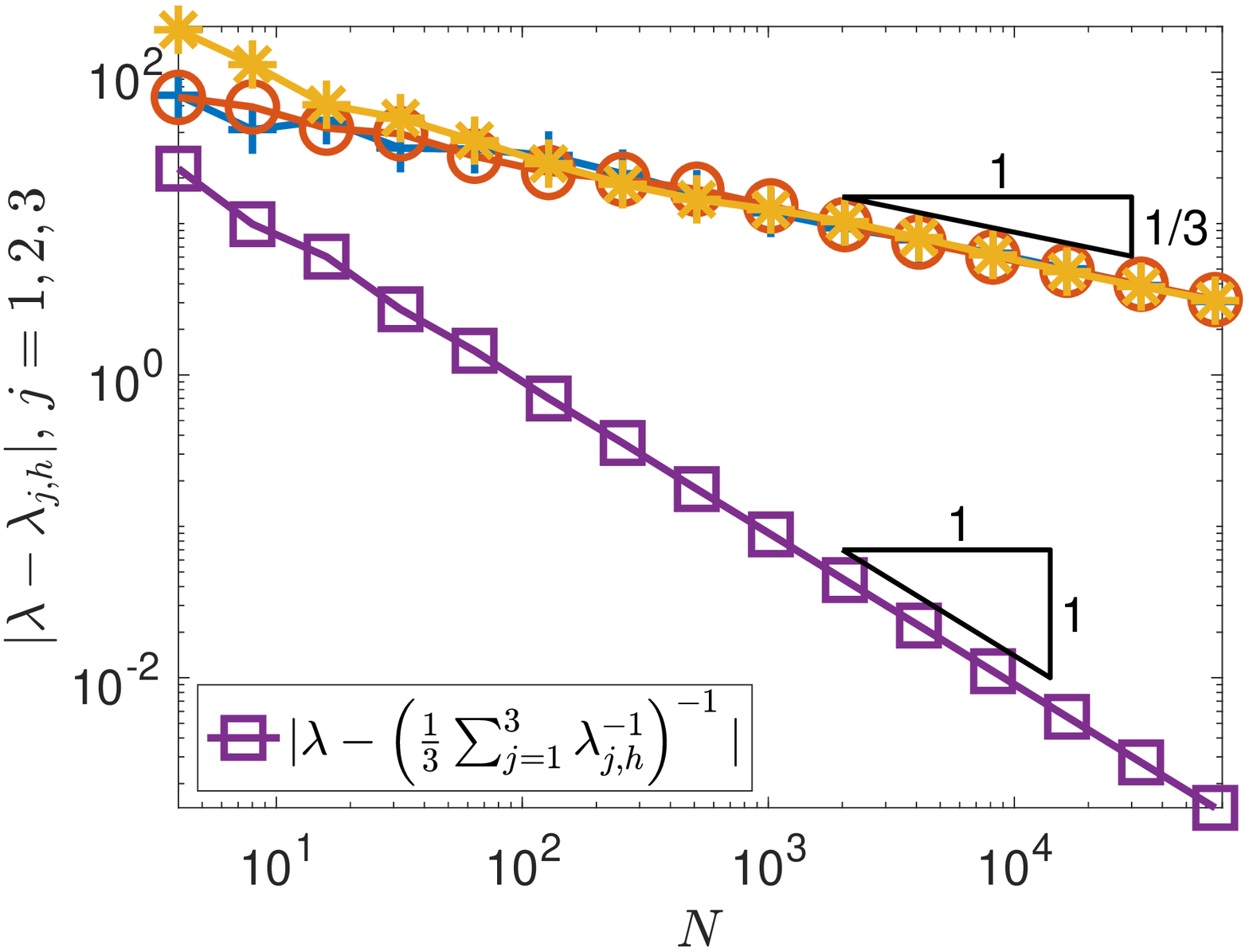}\caption{Example
with reduced regularity for $\mathbb{P}_{1}$, and $\mathbb{P}_{2}$ FEM in 1d.}%
\label{fig:NonSmooth1d}%
\end{figure}
\begin{figure}[ptb]
\centering\includegraphics[width=0.49\textwidth]{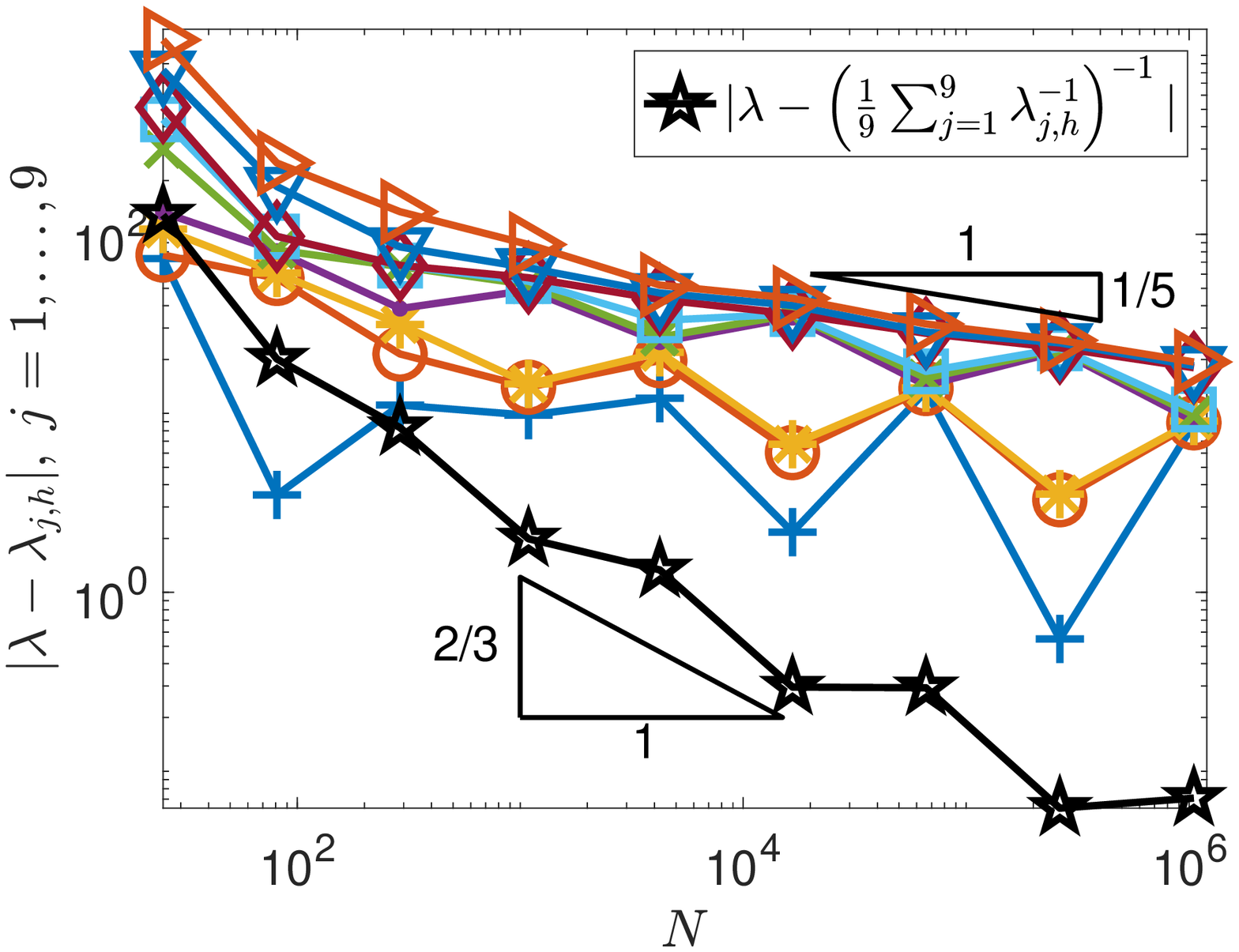}
\includegraphics[width=0.49\textwidth]{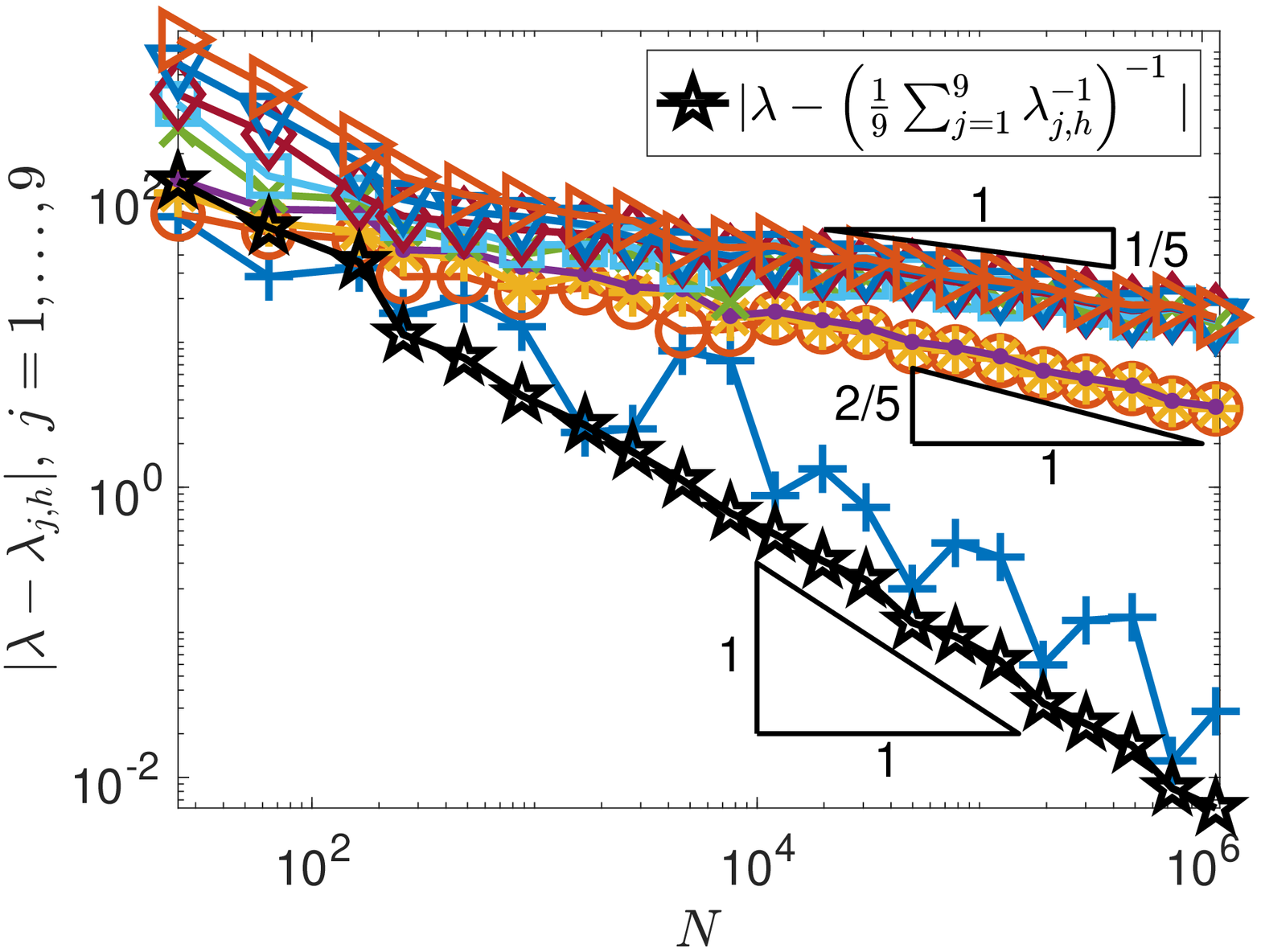}\caption{Example
with reduced regularity for $\mathbb{P}_{1}$ FEM on uniform and adaptive
meshes in 2d.}%
\label{fig:adaptivity}%
\end{figure}

Here, we choose the coefficient $a$ such that its jump is not
aligned with any (refined) mesh. Therefore, we consider $\Omega=(0,1)$ with
$a=1$ on $[0,1/3]$ and $a=a_{\operatorname*{R}}$ on $(1/3,1]$, with
\begin{align*}
a_{\operatorname*{R}}  &  =8.834634001449438+2.381273183203226i,\\
c  &  =-23.62602259938114+23.10185194698031i,
\end{align*}
and the first (smallest in magnitude) complex eigenvalue
\[
\lambda=72.26224904068889+65.85698689932984i.
\]
By construction $\lambda$ has algebraic multiplicity $m_{alg}=3$ and ascent
$\alpha=3$ for $d=1$. As in the previous example, the tensor product of the
(generalized) eigenfunctions leads to the eigenvalue $2\lambda$ with algebraic
multiplicity $m_{alg}=9$ in two dimensions, and numerically we observe the
ascent $\alpha=5$.

Note that since the mesh is not aligned with the jump of the diffusion
coefficient, the regularity of the (generalized) eigenfunctions are reduced to
$H^{1+r}\left(  \Omega\right)  $ for any $0<r<1/2$. Therefore, we observe
reduced convergence of the eigenvalues due to the reduced convergence of the
(generalized) eigenfunctions on uniform meshes.

In Figure~\ref{fig:NonSmooth1d}, we see that the convergence is reduced by two
separate issues: the reduced regularity and the large defect of the
eigenvalue. We observe the theoretically expected suboptimal convergence rates
of the mean eigenvalue error of $\mathcal{O}(N^{-1})$ for both $\mathbb{P}%
_{1}$ and $\mathbb{P}_{2}$ finite elements due to the reduced regularity. The
convergence of the eigenvalue errors is even further reduced due to the defect
$\alpha=3$, hence the convergence is only of order $\mathcal{O}(N^{-1/3})$.

The situation in two dimensions is less clear from the numerical point of
view. In Figure~\ref{fig:adaptivity}, we observe in the left figure reduced
rates of the mean eigenvalue error for $\mathbb{P}_{1}$ finite elements on
uniform meshes, but still at worst $\mathcal{O}(N^{-1/5})$ convergence of the
eigenvalues, which is expected from the defect of $\lambda$, but is not
further decreased by the low regularity. This might be a pre-asymptotic
effect. In the right figure we use an adaptive mesh refinement algorithm
\cite{GC,GGMO,HR,YSBL}. Based on the previous observation, that even very
small perturbations lead to a split of the defective eigenvalue into clustered
eigenvalues, we measure the error of the defective eigenvalue, as if it was a
cluster of eigenvalues, with the a posteriori error estimator
\begin{align*}
\eta_{h}^{2}  &  :=\sum_{j=1}^{m}\sum_{T\in\mathcal{T}}\big(h_{T}^{2}%
\Vert\Delta u_{j,h}+\lambda_{j,h}u_{j,h}\Vert_{0,T}^{2}+\sum_{E\subset\partial
T\backslash\partial\Omega}h_{E}\Vert\lbrack a\nabla u_{j,h}\cdot
\mathbf{n}]\Vert_{0,E}^{2}\\
&  \quad+\sum_{E\subset\partial T\cap\Gamma_{\operatorname*{R}}}h_{E}\Vert a\nabla
u_{j,h}\cdot\mathbf{n}+cu_{j,h}\Vert_{0,E}^{2}\\
&  \quad+ h_{T}^{2}\Vert\Delta u_{j,h}^{*}+\lambda_{j,h}^{*}u_{j,h}^{*}%
\Vert_{0,T}^{2}+\sum_{E\subset\partial T\backslash\partial\Omega}h_{E}%
\Vert\lbrack\overline{a}\nabla u_{j,h}^{*}\cdot\mathbf{n}]\Vert_{0,E}^{2}\\
&  \quad+\sum_{E\subset\partial T\cap\Gamma_{\operatorname*{R}}}h_{E}\Vert\overline
{a}\nabla u_{j,h}^{*}\cdot\mathbf{n}+\overline{c}u_{j,h}^{*}\Vert_{0,E}%
^{2}\big),
\end{align*}
where $(\lambda_{j,h}^{*},u_{j,h}^{*})$ denotes the $j$-th eigenpair of the
adjoint eigenvalue problem. 
Despite that $\eta_{h}^{2}$ provides only a valid upper
bound for clustered eigenvalues, we observe in Figure~\ref{fig:adaptivity}
that adaptive mesh-refinement based on $\eta_h^2$ leads to
optimal convergence of the mean eigenvalue error.
By construction, $\eta_h^2$ cannot give any a
posteriori information about the ascent of the eigenvalue.
Nevertheless, this experiment illustrates that an error estimator is in principle able to heuristically 
detect defective eigenvalues from their reduced convergence rates,
although a theoretical foundation has still to be developed.

\section{Conclusions}

We described a constructive way of deriving benchmark problems with highly
defective eigenvalues. We provided the parameters for two such examples. We
confirmed in numerical experiments that the Babu{\v{s}}ka-Osborn theory is
sharp and that convergence rates between the two extreme cases do occur in
practical computations. Since even for non-smooth eigenfunctions, the mean
eigenvalue error converges faster, one is in principle able to detect
defective eigenvalues numerically by tracking the convergence behavior of the
eigenvalues and the mean eigenvalue error on uniformly or adaptively refined
meshes. If the mean eigenvalue error converges faster, that means that the
eigenvalue is defective.

\end{document}